\newcounter{thmno}
\numberwithin{thmno}{section}
\newtheorem{thm}[thmno]{Theorem}
\newtheorem{prop}[thmno]{Proposition}
\newtheorem{lem}[thmno]{Lemma}
\newtheorem{cor}[thmno]{Corollary}
\newtheorem{conj}[thmno]{Conjecture}
\bmdefine{\aaa}{a}
\bmdefine{\bbb}{b}
\bmdefine{\ccc}{c}
\bmdefine{\ddd}{d}
\bmdefine{\eee}{e}
\bmdefine{\uuu}{u}
\bmdefine{\xxx}{x}
\bmdefine{\zzz}{z}
\bmdefine{\zerovec}{0}
\def\FFF{\mathbb{F}}
\def\KKK{\mathbb{K}}
\def\CCC{\mathbb{C}}
\def\NNN{\mathbb{N}}
\def\RRR{\mathbb{R}}
\newcommand{\GL}{\mathrm{GL}}
\newcommand{\vect}{\mathrm{vec}}
\newcommand{\rank}{\mathrm{rank}}
\newcommand{\mrank}{\mathrm{mrank}}
\newcommand{\mtrank}{\mathrm{mtrank}}
\newcommand{\grank}{\mathrm{grank}}
\newcommand{\diag}{\mathrm{diag}}
\begin{document}

\title{Rank of tensors with size $2\times \cdots\times 2$}
\author{Toshio Sumi, Toshio Sakata and Mitsuhiro Miyazaki}
\maketitle

\begin{abstract}
We study an upper bound of ranks of $n$-tensors with size $2\times\cdots\times2$
over the complex and real number field.
We characterize a $2\times 2\times 2$ tensor with rank $3$ by using the
Cayley's hyperdeterminant and some function.
Then we see another proof of Brylinski's result that the maximal rank of $2\times2\times2\times2$ complex tensors is $4$.
We state supporting evidence of the claim that $5$ is a typical rank of $2\times2\times2\times2$ real tensors.
Recall that Kong and Jiang show that the maximal rank of $2\times2\times2\times2$ real tensors is less than or equal to $5$.
The maximal rank of $2\times2\times2\times2$ complex (resp. real) tensors gives an upper bound of the maximal rank of $2\times\cdots\times 2$ complex (resp. real) tensors.
\end{abstract}

\section{Introduction}
Let $\FFF$ be the real number field $\RRR$ or the complex number field $\CCC$.
For a positive integer $n$, an $n$-tensor $T$ over $\FFF$ with size $2\times \cdots\times 2$  is 
$$(t_{i_1,i_2,\ldots,i_n})$$
consisting of $2^n$ elements where $i_1,i_2,\ldots,i_n$ are taken $1$ and $2$, and 
$t_{i_1,i_2,\ldots,i_n}\in \FFF$ for $i_1,i_2,\ldots,i_n=1,2$.
Let $(\FFF^{2})^{\otimes n}$ be the set of all $n$-tensors over $\FFF$ with size $2\times \cdots\times 2$.
This set is closed by sum operation and scalar multiplication:
$$(t_{i_1,i_2,\ldots,i_n})+(s_{i_1,i_2,\ldots,i_n})=(t_{i_1,i_2,\ldots,i_n}+s_{i_1,i_2,\ldots,i_n})$$
$$c(t_{i_1,i_2,\ldots,i_n})=(ct_{i_1,i_2,\ldots,i_n})$$
And $\GL(2,\FFF)^n$ acts on the set $(\FFF^{2})^{\otimes n}$.
We call $T$ is a rank one tensor if $T$ is irreducible, that is, 
$T=T_1+T_2$ for some nonzero tensors $T_1,T_2$
implies that $T_1=sT_2$ for some $s\in \FFF$.
The rank of $T$, denoted by $\rank_{\FFF}(T)$ is the smallest integer $s\geq0$ such that
$T$ is expressed as the sum of $s$ rank one tensors.
The rank of the zero tensor is zero.  Rank is invariant under the $\GL(2,\FFF)^n$-action.
In general the determination of the rank of a tensor is hard.

The rank and classification of $2\times 2\times 2$ tensors are well-known, for example, 
see \cite{JaJa:1979b} or \cite{Sumi-etal:2009}.
The maximal rank of $2\times 2\times 2$ real tensors is equal to
one of $2\times 2\times 2$ complex tensors.

For a real $2\times 2\times 2\times 2$ tensor,
Kong et al. \cite{Kong-etal:2012} show that $\rank_{\RRR}(T)\leq 5$.

\begin{thm}[\cite{Kong-etal:2012}] \label{thm:Kong-etal}
Any real $2\times 2\times 2\times 2$ tensor has rank less than or equal to $5$.
\end{thm}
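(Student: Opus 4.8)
The plan is to slice $T$ along its fourth mode. Fixing a basis $e_1,e_2$ of the fourth copy of $\RRR^2$, write $T=T_1\otimes e_1+T_2\otimes e_2$ with $T_1,T_2\in(\RRR^2)^{\otimes 3}$; the $\GL(2,\RRR)$-action on the fourth factor lets us replace the pair $(T_1,T_2)$ by any other basis of the pencil $\langle T_1,T_2\rangle_\RRR$. The key is a reduction lemma: if some member $Z=\lambda T_1+\mu T_2$, $(\lambda,\mu)\neq(0,0)$, has $\rank_\RRR(Z)\le 2$, then $\rank_\RRR(T)\le 5$. Indeed, choosing the new fourth-mode basis so that $Z$ is the first slice and $Z'$ the second, we get $\rank_\RRR(T)\le\rank_\RRR(Z\otimes e_1)+\rank_\RRR(Z'\otimes e_2)=\rank_\RRR(Z)+\rank_\RRR(Z')\le 2+3$, using subadditivity of rank, the identity $\rank_\RRR(A\otimes v)=\rank_\RRR(A)$ for a $3$-tensor $A$ and $v\neq 0$, and the known bound that every $2\times 2\times 2$ real tensor has rank at most $3$ (the classification in \cite{JaJa:1979b, Sumi-etal:2009}). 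In particular, if $T_1,T_2$ are linearly dependent the fourth-mode flattening has rank $1$ and $\rank_\RRR(T)\le 3$, so from now on the pencil is two-dimensional and all its members are nonzero, of rank $1$, $2$, or $3$.

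Everything then reduces to a dichotomy. \emph{Case 1: some member of the pencil has rank $\le 2$.} Here the reduction lemma finishes the proof. By the classification of $2\times 2\times 2$ real tensors this covers, among others, every pencil having a member with positive Cayley hyperdeterminant $\mathrm{Det}$ (such a member is $\GL(2,\RRR)^3$-equivalent to $e_1^{\otimes 3}+e_2^{\otimes 3}$, hence has rank $2$), as well as every pencil meeting the rank-one or the ``degenerate'' rank-two locus inside $\{\mathrm{Det}=0\}$ (the tensors for which one mode factors out). \emph{Case 2: every member of the pencil has rank exactly $3$.} Since a rank-three $2\times 2\times 2$ real tensor has either $\mathrm{Det}<0$ or $\mathrm{Det}=0$ with the tensor lying in the orbit of $W=e_1\otimes e_1\otimes e_2+e_1\otimes e_2\otimes e_1+e_2\otimes e_1\otimes e_1$, this case forces the binary quartic $\varphi(\lambda,\mu):=\mathrm{Det}(\lambda T_1+\mu T_2)$ to be $\le 0$ on all of $\RRR^2$, vanishing only at the $W$-type members. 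This is the hard case, and I would attack it through a normal-form analysis.

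In Case 2 the trivial estimate is only $3+3=6$, so one must use the interaction between the two slices. The idea is to put $(T_1,T_2)$ into a canonical form under the residual group $\GL(2,\RRR)^3$ acting diagonally on the first three modes — an analogue, for a pencil of $2\times 2\times 2$ tensors, of the Kronecker canonical form of a matrix pencil. Recalling from the $2\times 2\times 2$ classification that $\{\mathrm{Det}>0\}$, $\{\mathrm{Det}<0\}$ and the $W$-orbit are each a single $\GL(2,\RRR)^3$-orbit, one may first normalize $T_1$ (to the $\mathrm{Det}<0$ canonical tensor, or to $W$) and then reduce $T_2$ modulo the stabilizer of $T_1$; the conditions $\varphi\le 0$ everywhere and ``no member has rank $\le 2$'' then cut the possibilities down to a short list of low-dimensional families. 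In each family one exhibits an explicit decomposition of $T$ into five rank-one tensors; the gain of one unit over $3+3$ invariably comes from the fact that in a suitable flattening the two slices $T_1,T_2$ span only a two-dimensional space, so two rank-one terms can be shared by both.

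The main obstacle is precisely this last step — bounding the rank in the ``elliptic'' case in which every member of the pencil is a rank-three real tensor (geometrically, the case of a negative semidefinite $\varphi$). Everything outside this case reduces in one line to the bound $3$ for $2\times 2\times 2$ tensors via the $2+3$ splitting, and over $\CCC$ the same splitting gives Brylinski's bound $2+2=4$ as soon as the pencil has a member with $\mathrm{Det}\neq 0$; what makes the real statement harder is only that over $\RRR$ one needs $\mathrm{Det}>0$, not just $\mathrm{Det}\neq 0$, to conclude rank $2$, so the pencils not handled by the easy argument form a genuinely larger set. A secondary point one must pin down is the boundary of Case 1, namely which rank-two $2\times 2\times 2$ real tensors lie in $\{\mathrm{Det}=0\}$ and that $W$ is the unique rank-three orbit there, since the exact hypothesis of the reduction lemma rests on this.
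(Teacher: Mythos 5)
Your argument is complete only in the easy case, and the hard case is exactly where you stop. Your reduction lemma gives $\rank_{\RRR}(T)\leq 2+3=5$ whenever some member $\lambda T_1+\mu T_2$ of the fourth-mode pencil has rank at most $2$, which is fine; but in your Case 2 (every member of the pencil has rank $3$) you only describe a plan — normalize $T_1$ under $\GL(2,\RRR)^3$, reduce $T_2$ modulo the stabilizer, obtain ``a short list of low-dimensional families'' and exhibit a five-term decomposition in each — without producing the list or a single decomposition. This case is not vacuous and cannot be waved away: the paper's Proposition~\ref{prop:notConverttorank2slice} exhibits the explicit tensor $\underline{X}$ of \eqref{eq:exampletensor} for which every slice pair, in every flattening and after every $\GL(2,\RRR)^4$ change of basis, has rank $3$ (all relevant hyperdeterminants stay strictly negative), so your elliptic case contains genuine tensors and your proposal, as written, does not prove the theorem for them.

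The paper's route avoids this case entirely, and the difference is instructive. Its key step, Proposition~\ref{prop:leq2implies4a}, is strictly stronger than your reduction lemma: if one slice $(A_1;A_2)$ has rank $\leq 2$, then the whole $2\times2\times2\times2$ tensor has rank $\leq 4$, not merely $5$ (proved via the behaviour of $\Delta((B_1;B_2)+x(A_1;A_2))$ and Lemma~\ref{lem;onepartiszero}, not via the $2+3$ subadditivity bound). With that in hand, the rank-$3$-slice case is dispatched by a perturbation trick: choose a rank-one $C$ with $\rank(A-C)=2$, apply the rank-$4$ bound to $\underline{Y}-(C;O)$, and add back $\rank(C;O)=1$ to get $5$. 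Note that your weaker lemma cannot drive this trick — it would only yield $5+1=6$ — so to close your gap you would either have to prove the stronger rank-$4$ statement (essentially reconstructing the paper's Proposition~\ref{prop:leq2implies4a}) or actually carry out the full real normal-form analysis of pencils all of whose members have rank $3$, which is a substantial piece of work you have not done. A smaller point: your classification claims in Case 2 (that $\Delta=0$ rank-$3$ tensors form a single $W$-orbit) are also left unverified, though they are secondary to the main gap.
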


Brylinski gave the maximal rank of $2\times2\times2\times 2$ tensors over $\CCC$.

\begin{thm}[{\cite[Theorem~1.1]{Brylinski:2002}}] \label{thm:Brylinski}
Any complex $2\times2\times2\times 2$ tensor has rank less than or equal to $4$.
\end{thm}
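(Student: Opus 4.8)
The plan is to regard $T$ as a pencil of $2\times2\times2$ tensors and to bootstrap from the $2\times2\times2$ theory. Fix a basis of one of the four $\CCC$-factors (we may use whichever is convenient) and write $T=T_1\otimes e_1+T_2\otimes e_2$ with $T_1,T_2\in(\CCC^2)^{\otimes 3}$. If $T_1$ and $T_2$ are linearly dependent then $T=S\otimes v$ for some $2\times2\times2$ tensor $S$, so $\rank_\CCC(T)=\rank_\CCC(S)\le 3$ and there is nothing to prove; hence assume $T_1,T_2$ are independent and put $\mathcal{P}=\langle T_1,T_2\rangle$. A change of basis in the chosen factor replaces $(T_1,T_2)$ by any other basis $(U_1,U_2)$ of $\mathcal{P}$, and then $T=U_1\otimes f_1+U_2\otimes f_2$ gives $\rank_\CCC(T)\le\rank_\CCC(U_1)+\rank_\CCC(U_2)$. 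Thus it is enough to exhibit a basis $(U_1,U_2)$ of $\mathcal{P}$ with $\rank_\CCC(U_1)+\rank_\CCC(U_2)\le 4$: for instance two members of rank $\le 2$, or one member of rank $\le 1$ together with one of rank $\le 3$.

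I would first dispose of the generic situation. Recall that a $2\times2\times2$ complex tensor has rank $\le 2$ as soon as Cayley's hyperdeterminant $\Delta$ does not vanish on it, because the locus of rank-$3$ tensors is contained in $\{\Delta=0\}$. Now $\lambda\mapsto\Delta(T_1+\lambda T_2)$ is a polynomial of degree $\le 4$; if it is not identically zero, choose $\lambda_0$ outside its zero set and set $U_1:=T_1+\lambda_0T_2$, so $\rank_\CCC(U_1)\le 2$. Then $\mu\mapsto\Delta(T_2+\mu U_1)$ is again a polynomial of degree $\le 4$, and it is not identically zero (else $\Delta$ would vanish on the whole plane $\mathcal{P}$); picking $\mu_0$ outside its zero set and $U_2:=T_2+\mu_0U_1$ produces a basis $(U_1,U_2)$ of $\mathcal{P}$ with both ranks $\le 2$, so $\rank_\CCC(T)\le 4$.

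It remains to treat the degenerate case in which $\Delta$ vanishes identically on $\mathcal{P}$. Applying the previous paragraph to each of the four factors in turn, we may assume this happens for the pencil of slices in every one of the four directions. At this point I would bring in the characterization of rank-$3$ $2\times2\times2$ tensors established earlier --- that such a tensor is exactly one with $\Delta=0$ and the auxiliary function nonzero --- together with the classification of $2\times2\times2$ tensors. If $\mathcal{P}$ contains a rank-one tensor $R$, write $\mathcal{P}=\langle R,U\rangle$ and get $\rank_\CCC(T)\le 1+\rank_\CCC(U)\le 4$. Otherwise every rank-$\le 2$ element of $\mathcal{P}$ is a non-concise $2\times2\times2$ tensor (a concise rank-$2$ tensor has $\Delta\ne 0$), so $\mathcal{P}$ has at most one such direction and is otherwise populated by concise rank-$3$ tensors; matching this with the analogous restriction in the other three directions confines $T$ to a restricted family of normal forms. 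For each of these I would write out a decomposition into at most four rank-one terms, using the explicit form of rank-$3$ $2\times2\times2$ tensors over $\CCC$ to merge two of the rank-one summands of $T$ into one.

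The crux is this last step. The crude bound $\rank_\CCC(T)\le\rank_\CCC(U_1)+\rank_\CCC(U_2)$ only yields $5$ in the degenerate case --- in agreement with Theorem~\ref{thm:Kong-etal} --- so one must genuinely use the rigidity of rank-$3$ $2\times2\times2$ tensors over $\CCC$ to recover the last unit. Checking that every tensor all of whose slice-pencils lie in $\{\Delta=0\}$ reduces to one of the normal forms, and that each of those admits a decomposition into at most four rank-one tensors, is where the real work of the proof is concentrated.
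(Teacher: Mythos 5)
Your generic case is fine: when $\Delta$ is not identically zero on the pencil $\mathcal{P}$ of slices, choosing two members off the quartic hypersurface $\{\Delta=0\}$ gives two rank-$\le 2$ tensors spanning $\mathcal{P}$ and hence $\rank_\CCC(T)\le 4$. The genuine gap is the degenerate case, which you only outline. It is not vacuous: for instance the pencil spanned by $(E;S)$ and $(S;O)$, with $S=\begin{pmatrix}0&1\\0&0\end{pmatrix}$, has $\Delta\equiv 0$ yet contains concise rank-$3$ members, so a tensor can have $\Delta\equiv 0$ on its slice pencils while every convenient basis of the pencil costs $3+2=5$ by the crude subadditivity bound. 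Your treatment of this case --- ``matching the restrictions in the four directions confines $T$ to a restricted family of normal forms, for each of which I would write out a decomposition'' --- names no normal forms, proves no classification, and exhibits no decompositions; but this is exactly where the last unit of rank must be recovered, as you yourself note. As written, the argument proves only $\rank_\CCC(T)\le 5$ outside the generic case.

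The paper closes this case by a different mechanism, which you may want to compare. Instead of asking for a member of the pencil with $\Delta\ne 0$, it uses the auxiliary function $\Theta$: after normalizing a rank-$3$ slice pair to $(E;S)$ with $S$ nilpotent, $\Theta(xS_{1\cdot}+S_{2\cdot})$ is a quadratic polynomial in $x$, hence always has a complex root $x_0$; since by Corollary~\ref{cor:2x2x2}\eqref{cor:2x2x2:complex} a complex rank-$3$ tensor must have $\Theta\ne 0$, the member $x_0S_{1\cdot}+S_{2\cdot}$ has rank $\le 2$ regardless of whether $\Delta$ vanishes identically on the pencil. The remaining step is then not the naive $2+3$ bound but Proposition~\ref{prop:leq2implies4a}: if one slice pair has rank $\le 2$, the whole tensor has rank $\le 4$, proved by splitting off a suitable rank-one piece (the leading-coefficient computation for $\Delta((C_1;C_2)+x(E;\diag(a,b)))$ and Lemma~\ref{lem;onepartiszero}). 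Your proposal lacks any substitute for this pair of ingredients, so the degenerate case remains unproved.
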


$2\times 2\times 2\times 2$ tensors over $\FFF$
are used to represent the entanglement of four quantum bits (qubits). 
Verstraete et al \cite{Verstraete-etal:2002} gave a classification of $2\times 2\times 2\times 2$  rank one tensors.
We also show that $\rank_{\FFF}(A_1;A_2)\leq 2$ then $\rank_{\FFF}(T)\leq 4$ 
(see Propositions~\ref{prop:leq2implies4a}).
This was obtained by Kong et al. \cite{Kong-etal:2012} over $\RRR$.
By a numerical analysis, it seems that there are tensors over $\RRR$ with rank $5$.
There are tensors over the finite field $\mathbb{F}_3$ with rank $5$ 
(cf. \cite{Bremner-Stavrou:2012}).
The purpose of this paper is to give an upper bound of rank of tensors with size 
$2\times\cdots\times2$ by using the maximal rank of $2\times 2\times 2\times2$ real tensors (see Theorem~\ref{thm:higher2x...x2}).
Our main tool is a matrix theory.

\section{$2\times2\times 2$ tensors}

The maximal rank of $2\times2\times2$ tensors over $\FFF$ is equal to $3$.
In this section, we clarify a condition for a $2\times2\times2$ tensor to have rank three.

We denote the $2\times2$ identity matrix by $E_2$ or simply $E$.
Let $A=(a_{ij})=(\aaa_1,\aaa_2)$ and $B=(b_{ij})=(\bbb_1,\bbb_2)$ be $2\times 2$ matrices.
$\GL(2,\FFF)^3$ acts on the set of $2\times 2\times 2$ tensors by
$$(P,Q,R)\cdot (A;B)=(r_{11}PAQ^\top+r_{12}PBQ^\top;r_{21}PAQ^\top+r_{22}PBQ^\top),$$
where $R=\begin{pmatrix} r_{11}&r_{12}\\ r_{21}&r_{22}\end{pmatrix}$.
For a subgroup $G$ of $\GL(2,\FFF)^3$, two $2\times 2\times 2$ tensors $T_1$ and $T_2$ are $G$-equivalent if $g\cdot T_1=T_2$ for some $g\in G$.

\begin{prop} Suppose that $xA+yB$ are nonzero for any $(x,y)\ne(0,0)$ in $\FFF^2$.
If $(A;B)$ has rank two, then there is a tensor $X=(X_1;X_2)$ such that
$X$ is $\{E_2\}^2\times \GL(2,\FFF)$-equivalent to $(A;B)$ and $\det(X_1)=\det(X_2)=0$.
\end{prop}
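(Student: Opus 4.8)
The plan is to reduce the statement to a question about the binary quadratic form $\det(xA+yB)$ and then to read off two linearly independent zeros of it from a rank-two decomposition of $(A;B)$. First I would record the relevant orbit map: since the acting element has $P=Q=E_2$, for $R=\begin{pmatrix} r_{11}&r_{12}\\ r_{21}&r_{22}\end{pmatrix}\in\GL(2,\FFF)$ we get $(E_2,E_2,R)\cdot(A;B)=(r_{11}A+r_{12}B;\,r_{21}A+r_{22}B)$. Hence it suffices to produce linearly independent $(x_1,y_1),(x_2,y_2)\in\FFF^2$ with $\det(x_kA+y_kB)=0$ for $k=1,2$: the matrix $R$ whose rows are $(x_1,y_1)$ and $(x_2,y_2)$ is then invertible, and $X:=(E_2,E_2,R)\cdot(A;B)=(x_1A+y_1B;\,x_2A+y_2B)$ lies in the $\{E_2\}^2\times\GL(2,\FFF)$-orbit of $(A;B)$ and has $\det X_1=\det X_2=0$.

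Next I would use the rank hypothesis. Every rank-one $2\times2\times2$ tensor can be written as $(\alpha M;\beta M)$ with $M$ a $2\times2$ matrix of rank at most $1$ and $(\alpha,\beta)\ne(0,0)$, so a decomposition of $(A;B)$ into two rank-one tensors yields rank-$\le1$ matrices $M_1,M_2$ and vectors $p_k=(\alpha_k,\beta_k)$ with $A=\alpha_1M_1+\alpha_2M_2$, $B=\beta_1M_1+\beta_2M_2$, and therefore $xA+yB=(\alpha_1x+\beta_1y)M_1+(\alpha_2x+\beta_2y)M_2$. Neither summand can be zero: if, say, the first were zero then $(A;B)$ would be a single rank-one tensor, forcing $A$ and $B$ proportional and contradicting the hypothesis $xA+yB\ne0$ for $(x,y)\ne(0,0)$; in particular $p_1,p_2\ne(0,0)$. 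Now take $(x_k,y_k)=(\beta_k,-\alpha_k)\ne(0,0)$. Then $\alpha_kx_k+\beta_ky_k=0$, so $x_kA+y_kB$ is a scalar multiple of the single matrix $M_{3-k}$, which has rank at most $1$; hence $\det(x_kA+y_kB)=0$.

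Finally I would check linear independence of $(x_1,y_1)=(\beta_1,-\alpha_1)$ and $(x_2,y_2)=(\beta_2,-\alpha_2)$, which amounts to linear independence of $p_1$ and $p_2$; this is the one point with genuine content. If $p_2=cp_1$ for a scalar $c$ (the case $p_1=cp_2$ being symmetric), then $A=\alpha_1(M_1+cM_2)$ and $B=\beta_1(M_1+cM_2)$, so $A$ and $B$ both lie on the line spanned by $M_1+cM_2$ and are linearly dependent, contradicting the hypothesis once more. Hence $p_1,p_2$ are independent, $R$ is invertible, and $X$ is the desired tensor. I expect the only delicate part to be the bookkeeping around the degenerate subcases (a vanishing summand, or $\alpha_k=0$ so that one slice is the zero matrix); the essential geometric fact being exploited is that rank two forces the two ``third-factor'' vectors $p_1,p_2$ to span $\FFF^2$.
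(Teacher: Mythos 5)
Your proposal is correct and follows essentially the paper's own route: decompose $(A;B)$ into two rank-one tensors, so $A=\alpha_1M_1+\alpha_2M_2$, $B=\beta_1M_1+\beta_2M_2$ with $M_1,M_2$ of rank at most one, note that the hypothesis forces the coefficient matrix to be nonsingular, and act by an invertible matrix in the third factor (the paper uses the inverse of the coefficient matrix, you effectively use its adjugate) to make the two slices scalar multiples of $M_1$ and $M_2$, hence of determinant zero. Your extra bookkeeping on the degenerate subcases just fills in the paper's one-line claim that the coefficient matrix is nonsingular.
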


\begin{proof}
Suppose that $(A;B)$ has rank two.  There are two rank one matrices $C_1$ and $C_2$
such that $A=pC_1+qC_2$ and $B=rC_1+sC_2$.  By the assumption, $P:=\begin{pmatrix} p&q\\ r&s\end{pmatrix}$ is nonsingular.  Let $X=(E_2,E_2,P^{-1})\cdot (A;B)$.
Then $X=(C_1;C_2)$.
\end{proof}

Remark that if $xA+yB$ are nonzero for any $(x,y)\ne(0,0)$, then $\rank(A;B)\geq 2$.

Define 
\begin{equation*}
\begin{split}
\Delta(A;B)&=(\det(A+B)-\det(A-B))^2/4-4\det(A)\det(B) \\
&=(\det(\aaa_1,\bbb_2)+\det(\bbb_1,\aaa_2))^2-4\det(\aaa_1,\aaa_2)\det(\bbb_1,\bbb_2).
\end{split}
\end{equation*}
This number is called Cayley's hyperdeterminant up to sign.
The discriminant of the polynomial $\det(xA+B)$ (resp. $\det(A+xB)$) on $x$ is equal to $\Delta(A;B)$ if $A$ (resp. $B$) is nonsingular.  
Thus, over $\RRR$, if $A$ (resp. $B$) is nonsingular and $\Delta(A;B)>0$ then there are $x_1$ and $x_2$ in $\RRR$
such that $x_1\ne x_2$ and $\det(x_1A+B)=\det(x_2A+B)=0$ (resp. $\det(A+x_1B)=\det(A+x_2B)=0$), and $\rank(A;B)\leq2$.
If $\det(A)=\det(B)=0$ then  $\rank(A;B)\leq \rank(A)+\rank(B)\leq 2$.
A $2\times 2\times 2$ tensor $T$ over a field with characteristic not $2$ 
is nonsingular if and only if $\Delta(T)$ is not a square in the field 
(see \cite{Coolsaet:2012}).

We show the following property straightforwardly. 
\begin{prop}[{\cite[Proposition 5.6]{Silva-etal:2008}}] \label{prop:Silva}
$$\Delta(A;B)=\Delta(B;A).$$
$$\Delta(A+xB;yB)=y^2\Delta(A;B)$$
for any $x$.
$$\Delta((P,Q,R)\cdot (A;B))=\Delta(A;B)\det(P)^2\det(Q)^2\det(R)^2$$
for any matrices $P$, $Q$ and $R$.
\end{prop}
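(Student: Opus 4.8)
The plan is to reduce all three identities to a single observation. Expanding $\det(sA+tB)$ by multilinearity in the columns gives
$$\det(sA+tB)=\det(\aaa_1,\aaa_2)\,s^2+\bigl(\det(\aaa_1,\bbb_2)+\det(\bbb_1,\aaa_2)\bigr)\,st+\det(\bbb_1,\bbb_2)\,t^2,$$
so if we set $\phi_{A,B}(s,t):=\det(sA+tB)$, then $\Delta(A;B)$ is precisely the discriminant of the binary quadratic form $\phi_{A,B}$, i.e. the square of the coefficient of $st$ minus four times the product of the two square coefficients. This rewriting, which is immediate, is the only computation the proof really needs. The second ingredient is the elementary behaviour of the discriminant of a binary quadratic form under scaling and linear substitution: for a scalar $\lambda$ and a $2\times2$ matrix $L$, the form $(s,t)\mapsto\lambda\,\phi_{A,B}((s,t)L)$ has discriminant equal to $\lambda^2\det(L)^2$ times that of $\phi_{A,B}$; this follows in one line from the symmetric-matrix description of quadratic forms (or by a direct expansion, which also covers characteristic $2$).

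With this in hand, I would simply track how $\phi$ transforms. For $\Delta(A;B)=\Delta(B;A)$: $\phi_{B,A}(s,t)=\phi_{A,B}(t,s)$ is $\phi_{A,B}$ precomposed with the coordinate swap $\bigl(\begin{smallmatrix}0&1\\1&0\end{smallmatrix}\bigr)$, of determinant $-1$, so the discriminant is unchanged. For the second identity: $\phi_{A+xB,\,yB}(s,t)=\det(sA+(sx+ty)B)=\phi_{A,B}(s,\,sx+ty)$ is $\phi_{A,B}$ precomposed with $\bigl(\begin{smallmatrix}1&x\\0&y\end{smallmatrix}\bigr)$, of determinant $y$, so the discriminant is multiplied by $y^2$. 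For the third: writing $(A';B')=(P,Q,R)\cdot(A;B)$ one has $A'=P(r_{11}A+r_{12}B)Q^\top$ and $B'=P(r_{21}A+r_{22}B)Q^\top$, hence $sA'+tB'=P\bigl((sr_{11}+tr_{21})A+(sr_{12}+tr_{22})B\bigr)Q^\top$ and therefore $\phi_{A',B'}(s,t)=\det(P)\det(Q)\,\phi_{A,B}\bigl((s,t)R\bigr)$; applying the scaling–substitution rule with $\lambda=\det(P)\det(Q)$ and $L=R$ yields the factor $\det(P)^2\det(Q)^2\det(R)^2$.

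There is essentially no genuine obstacle; the only point requiring a little care is that when $A$ or $B$ is singular one must read ``discriminant'' as the polynomial expression in the three coefficients above rather than as the discriminant of an honest quadratic in one variable — which is why I would work throughout with the homogeneous form $\phi_{A,B}(s,t)$ instead of with $\det(A+xB)$. As an alternative, each identity can be proved by brute-force expansion of the defining formula for $\Delta$ using multilinearity of $2\times2$ determinants; for the third identity this is cleanest after observing that $(P,Q,R)\cdot T=(P,E,E)\cdot\bigl((E,Q,E)\cdot\bigl((E,E,R)\cdot T\bigr)\bigr)$ and checking the three one-leg cases, the $(P,E,E)$ case being trivial because every column determinant is scaled by $\det P$, and the $(E,Q,E)$ case reducing to it via the easily checked invariance of $\Delta$ under simultaneous transposition of both slices.
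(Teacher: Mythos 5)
Your proof is correct. All three identities check out: the expansion $\det(sA+tB)=s^2\det A+st\bigl(\det(\aaa_1,\bbb_2)+\det(\bbb_1,\aaa_2)\bigr)+t^2\det B$ identifies $\Delta(A;B)$ with the discriminant of the form $\phi_{A,B}$, the substitution matrices you exhibit (the swap, $\bigl(\begin{smallmatrix}1&x\\0&y\end{smallmatrix}\bigr)$, and $R$ itself) have the right determinants, and the factorization $sA'+tB'=P\bigl((s,t)R\text{-combination of }A,B\bigr)Q^\top$ for the $(P,Q,R)$-action matches the action as defined in the paper, so the covariance rule for discriminants under scaling and linear substitution delivers exactly the stated factors. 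The paper itself gives no written proof --- it states the proposition as shown ``straightforwardly,'' i.e.\ by direct expansion of the defining formula, and separately remarks that $\Delta(A;B)$ is the discriminant of $\det(xA+B)$ when $A$ is nonsingular. Your contribution is to make that remark the engine of the proof: by working with the homogeneous form $\phi_{A,B}(s,t)$ you avoid the nonsingularity caveat and replace three separate brute-force expansions by one transformation rule, which is cleaner and makes the invariance $\Delta((P,Q,R)\cdot T)=\det(P)^2\det(Q)^2\det(R)^2\,\Delta(T)$ transparent; the direct expansion the paper has in mind buys nothing extra beyond being self-contained at the level of $2\times2$ determinant identities, and your alternative closing remark shows you could do that too. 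No gaps.
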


If $A$ is nonsingular, $\rank(A;B)=2$ if and only if $A^{-1}B$ is diagonalizable.
If $A^{-1}B$ has distinct eigenvalues, the descriminant of the characteristic polynomial of $A^{-1}B$ which is equal to
\begin{equation*}
\begin{split}
(a_{11}b_{22}-a_{22}b_{11}+a_{12}b_{21}-a_{21}b_{12})^2+4(a_{22}b_{12}-a_{12}b_{22})(-a_{21}b_{11}+a_{11}b_{21}) \\
=(\det(\aaa_1,\bbb_2)-\det(\bbb_1,\aaa_2))^2-4\det(\aaa_1,\bbb_1)(\aaa_2,\bbb_2)
\end{split}
\end{equation*}
is positive in $\RRR$ and nonzero in $\CCC$.
Note that
$$\Delta(A;B)=(\det(\aaa_1,\bbb_2)-\det(\bbb_1,\aaa_2))^2-4\det(\aaa_1,\bbb_1)(\aaa_2,\bbb_2).$$
Thus we have the following proposition and theorem.

\begin{prop}[{\cite[Corollary 5.7, Propositions 5.9 and 5.10]{Silva-etal:2008}}] \label{prop:Silva-etal}
Let $T$ be a $2\times2\times 2$ real tensor.
\begin{enumerate}
\item The sign of $\Delta$ is invariant under the 
$\GL(2,\RRR)^{\times 3}$-action. 
\item If $\Delta(T)>0$ then $\rank(T)\leq 2$.
\item If $\Delta(T)<0$ then $\rank(T)=3$.
\item If $\rank(T)\leq 2$ then $\Delta(T)\geq 0$.
\end{enumerate}
\end{prop}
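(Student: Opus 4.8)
My plan is to derive all four parts from one observation: for a $2\times2\times2$ tensor $T=(A;B)$, the number $\Delta(T)$ is exactly the discriminant of the binary quadratic form
\[
q_T(x,y):=\det(xA+yB)=\det(\aaa_1,\aaa_2)\,x^2+\bigl(\det(\aaa_1,\bbb_2)+\det(\bbb_1,\aaa_2)\bigr)xy+\det(\bbb_1,\bbb_2)\,y^2 .
\]
Indeed, comparing with the definition of $\Delta$, the discriminant $\beta^2-4\alpha\gamma$ of $q_T=\alpha x^2+\beta xy+\gamma y^2$ equals $\Delta(T)$; this is a polynomial identity in the entries of $A$ and $B$, with no invertibility hypothesis (the discussion following the definition of $\Delta$ records only the special case in which $A$ or $B$ is nonsingular). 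Given this, part~(1) is immediate from the transformation formula of Proposition~\ref{prop:Silva}: for $P,Q,R\in\GL(2,\RRR)$ the factor $\det(P)^2\det(Q)^2\det(R)^2$ is a positive real, so $\Delta$ is rescaled by a positive number and its sign is unchanged.

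For part~(2) I would argue as follows. Assume $\Delta(T)>0$. If $\det A=\det B=0$ then $\rank(T)\le\rank(A)+\rank(B)\le2$ and there is nothing to prove; otherwise, since $\Delta(A;B)=\Delta(B;A)$ and rank is invariant under the slice-swap action, I may assume $A$ nonsingular, so that $q_T(x,1)=\det(xA+B)$ is a genuine quadratic in $x$ whose discriminant is $\Delta(T)>0$ and hence has two distinct real roots $x_1\ne x_2$. Then $C_i:=x_iA+B$ has rank $\le1$ for $i=1,2$, and because $x_1\ne x_2$ both $A$ and $B$ lie in the span of $\{C_1,C_2\}$; writing $C_i=\uuu_i\xxx_i^\top$ and substituting back displays $T$ as a sum of at most two rank one tensors, so $\rank(T)\le2$. (This last step is just the unlabelled proposition stated at the start of the section, and is spelled out in the surrounding discussion.)

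For part~(4) I would instead use the shape of a rank one tensor. Assume $\rank(T)\le2$ and write $T$ as a sum of at most two rank one tensors; a rank one tensor, written as a pair of slices, has the form $(z_1\uuu\xxx^\top;\,z_2\uuu\xxx^\top)$ for vectors $\uuu,\xxx\in\RRR^2$ and $\zzz=(z_1,z_2)^\top\in\RRR^2$. Collecting the two summands (a zero summand allowed, with the relevant vectors equal to $\zerovec$), one gets $xA+yB=\sum_{i=1}^{2}s_i\,\uuu_i\xxx_i^\top$ with $s_i:=z_{i1}x+z_{i2}y$; expanding $\det(xA+yB)$ by multilinearity in the two columns kills the $i=j$ terms (as $\det(\uuu_i,\uuu_i)=0$) and leaves
\[
q_T(x,y)=\det(\uuu_1,\uuu_2)\,\det(\xxx_1,\xxx_2)\,s_1s_2 ,
\]
a scalar times a product of two real linear forms in $(x,y)$. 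Since the discriminant of a real binary quadratic $\lambda(p_1x+q_1y)(p_2x+q_2y)$ is $\lambda^2(p_1q_2-p_2q_1)^2\ge0$, it follows that $\Delta(T)=\mathrm{disc}(q_T)=\bigl(\det(\uuu_1,\uuu_2)\det(\xxx_1,\xxx_2)\det(\zzz_1,\zzz_2)\bigr)^2\ge0$. Part~(3) then drops out: every $2\times2\times2$ tensor has rank at most $3$, while $\Delta(T)<0$ makes $\rank(T)\le2$ impossible by the contrapositive of part~(4), so $\rank(T)=3$.

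I do not anticipate a deep obstacle: the whole proposition reduces, via the identity $\Delta(T)=\mathrm{disc}(q_T)$, to the elementary fact that a real binary quadratic has positive (resp.\ nonnegative) discriminant precisely when it has two distinct real roots (resp.\ splits into real linear factors). The step that most needs care is organizing part~(4) so that the multilinear expansion of $\det(xA+yB)$ handles all degenerate configurations uniformly — coincident or zero vectors among the $\uuu_i,\xxx_i,\zzz_i$, and vanishing rank one summands — which it does automatically since it is a polynomial identity; this avoids the case analysis one might otherwise be tempted into. A subsidiary point is to make sure the identity $\Delta(T)=\mathrm{disc}(q_T)$ is established for arbitrary $A,B$, rather than only in the nonsingular case highlighted in the text.
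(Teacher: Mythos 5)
Your argument is correct, and it is genuinely more self-contained than what the paper does: the paper does not really prove this proposition, but quotes it from de~Silva--Lim and motivates it by the preceding discussion, whose route is ``$\rank(A;B)=2$ iff $A^{-1}B$ is diagonalizable, and the discriminant of the characteristic polynomial of $A^{-1}B$ equals $\Delta(A;B)$'' --- an argument that is only carried out under the hypothesis that $A$ (or, after swapping, $B$) is nonsingular. You instead work throughout with the polynomial identity $\Delta(T)=\mathrm{disc}\,\det(xA+yB)$, which needs no invertibility, and this pays off in two places: part~(2) is handled by splitting real roots of $\det(xA+B)$ and expressing $A,B$ in the span of the two singular pencil members $x_iA+B$ (essentially the computation the paper itself sketches right after defining $\Delta$), while part~(4) --- which the paper leaves entirely to the citation --- gets a direct proof via the factorization $\det(xA+yB)=\det(\uuu_1,\uuu_2)\det(\xxx_1,\xxx_2)\,s_1s_2$ coming from a rank-$\le 2$ decomposition, exhibiting $\Delta(T)$ as the perfect square $\bigl(\det(\uuu_1,\uuu_2)\det(\xxx_1,\xxx_2)\det(\zzz_1,\zzz_2)\bigr)^2$ and absorbing all degenerate configurations automatically. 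Parts~(1) and~(3) are obtained exactly as one would expect (positivity of $\det(P)^2\det(Q)^2\det(R)^2$ from Proposition~\ref{prop:Silva}, and the known maximal rank $3$ plus the contrapositive of~(4)), so there is nothing to object to; your version has the advantage of proving the quoted statement in full, at the small cost of invoking the standard fact that a rank one $2\times2\times2$ tensor is an outer product $\uuu\otimes\xxx\otimes\zzz$, which is consistent with the paper's usage.
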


Put 
$S=\begin{pmatrix} 0&1\\ 0&0 \end{pmatrix}$ and
$R=\begin{pmatrix} 0&1\\ -1&0 \end{pmatrix}$.
Note that
$\Delta(E;S)=0$
and
$\Delta(E;R)=-4$.

\begin{thm} \label{thm:2x2x2}
Let $A=(\aaa_1,\aaa_2)$ and $B=(\bbb_1,\bbb_2)$ be $2\times2$ real (resp. complex) matices
and let $T=(A;B)$ be a $2\times 2\times 2$ tensor.
$\rank_{\FFF}(T)\leq 2$ if and only if at least one of the following holds: 
\begin{enumerate}
\item \label{thm:2x2x2:parallel1} 
$\alpha A+\beta B=O$ for
some $(\alpha,\beta)\ne(0,0)$,
\item \label{thm:2x2x2:parallel2} 
$\alpha (\aaa_1,\bbb_1)+\beta(\aaa_2,\bbb_2)=O$ for
some $(\alpha,\beta)\ne(0,0)$,
\item \label{thm:2x2x2:same} $\Delta(A;B)=0$ and $\det(\aaa_1,\bbb_1)+\det(\aaa_2,\bbb_2)=0$, or
\item \label{thm:2x2x2:different} $\Delta(A;B)$ is positive (resp. nonzero).
\end{enumerate}
\end{thm}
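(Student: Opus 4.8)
The plan is to verify separately that each of conditions (1)--(4) implies $\rank_{\FFF}(T)\le2$, and then to prove the converse contrapositively: assuming $\rank_{\FFF}(T)\le2$ while (1), (2) and (4) all fail, deduce (3). It is convenient to attach to $T$ two matrix pencils: $xA+yB$ and $xM_1+yM_2$, where $M_1:=(\aaa_1,\bbb_1)$ and $M_2:=(\aaa_2,\bbb_2)$. One checks directly that
$\det(xM_1+yM_2)=\det M_1\cdot x^2+(\det(\aaa_1,\bbb_2)-\det(\bbb_1,\aaa_2))xy+\det M_2\cdot y^2$
has discriminant $\Delta(A;B)$, exactly as $\det(xA+yB)$ does. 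In these terms (1) says $xA+yB=O$ for some $(x,y)\ne(0,0)$, (2) says the same for $xM_1+yM_2$, (4) says $\det(xA+yB)$ is a nonzero quadratic form with two distinct zeros, and (3) says $\Delta=0$ and $\det M_1+\det M_2=0$, which over $\RRR$ forces $\det(xM_1+yM_2)\equiv0$. The ingredients are: the reduction recalled just before Proposition~\ref{prop:Silva}, the $\Delta$-invariances of Proposition~\ref{prop:Silva}, the real sign dichotomy of Proposition~\ref{prop:Silva-etal}, and the criterion ``$A$ nonsingular $\Rightarrow$ $\rank(A;B)=2$ iff $A^{-1}B$ is diagonalizable'', together with the fact that $\det(xA+B)$ has discriminant $\Delta(A;B)$.

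The three straightforward implications come first. If (1) holds, a substitution $(E,E,R)$ in the third factor turns one slice into $O$, so $T$ is $(\text{a }2\times2\text{ matrix})\otimes(\text{a vector})$ and $\rank_{\FFF}(T)\le2$; condition (2) becomes condition (1) after transposing the second and third tensor factors, which preserves rank, so again $\rank_{\FFF}(T)\le2$. If (4) holds then $\det(xA+yB)$ is a nonzero binary quadratic form whose discriminant $\Delta$ is positive over $\RRR$, resp. nonzero over $\CCC$, hence has two distinct zeros $[x_1:y_1]\ne[x_2:y_2]$ in $\mathbb{P}^1(\FFF)$; taking $R$ with rows $(x_1,y_1),(x_2,y_2)$ and replacing $T$ by $(E,E,R)\cdot T$ gives $\det A=\det B=0$, so $\rank_{\FFF}(T)\le\rank A+\rank B\le2$ (over $\RRR$ this also drops out of Proposition~\ref{prop:Silva-etal}(2)).

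The substance is the borderline case $\Delta(A;B)=0$. For the ``if'' part of (3): over $\RRR$, once $\det M_1+\det M_2=0$ the identity $\Delta=(\det(\aaa_1,\bbb_2)-\det(\bbb_1,\aaa_2))^2+4(\det M_1)^2$ shows that $\Delta=0$ forces $\det M_1=\det M_2=0$ and $\det(\aaa_1,\bbb_2)=\det(\bbb_1,\aaa_2)$, i.e. $\det(xM_1+yM_2)\equiv0$; so every matrix in the span of $M_1,M_2$ is singular. If $M_1,M_2$ are linearly dependent this is (2); otherwise the span is a two-dimensional space of singular $2\times2$ matrices, which must be one of the two rulings of the quadric $\{\det=0\}$, so its members share either a right null vector $v$ --- which expands to $v_1A+v_2B=O$, i.e. (1) --- or a left null vector $w$, whence $w^{\top}A=w^{\top}B=0$ and $\rank A,\rank B\le1$; either way $\rank_{\FFF}(T)\le2$. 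For the converse, suppose $\rank_{\FFF}(T)\le2$ and (4) fails: then $\Delta\le0$ over $\RRR$, and $\Delta<0$ is impossible by Proposition~\ref{prop:Silva-etal}(3), so $\Delta=0$. If moreover $\det M_1+\det M_2\ne0$, then $\det(xM_1+yM_2)$ is a nonzero form with a double zero; since (2) fails a substitution in the second factor makes $M_1$ singular and nonzero and $M_2$ nonsingular, after which $M_2^{-1}M_1$ has $0$ as a repeated eigenvalue and is therefore a nonzero nilpotent, hence not diagonalizable, forcing $\rank_{\FFF}(T)=3$ by the criterion --- a contradiction. Thus (3) holds, completing the converse.

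I expect the genuinely delicate step to be the $\Delta=0$ analysis, and in particular condition (3) over $\CCC$: there the equation $\det M_1+\det M_2=0$ together with $\Delta=0$ does not force $\det(xM_1+yM_2)\equiv0$ --- the pencil may instead acquire a complex double zero --- so the clean ``ruling of the quadric'' argument available over $\RRR$ has to be supplemented, when $M_1$ and $M_2$ are independent, by the double-zero reduction and the diagonalizability criterion used in the converse. Justifying the classification of two-dimensional spaces of singular $2\times2$ matrices, and keeping track of $\Delta$ and of rank through the various factor transpositions and $\GL(2,\FFF)$-substitutions, are the routine but bookkeeping-heavy parts.
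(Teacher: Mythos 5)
The genuinely problematic point is exactly the one you flag and postpone: the implication ``(3) $\Rightarrow\rank_{\CCC}(T)\le2$''. The supplement you propose for it --- reduce the pencil $xM_1+yM_2$ at its double zero and invoke the diagonalizability criterion --- proves the \emph{opposite} of what you need: over $\CCC$, $\Delta=0$ and $\det M_1+\det M_2=0$ allow $\det M_1\ne0$ (take mixed coefficient $\pm2i\det M_1$), and then the double-zero reduction produces a nonsingular slice whose inverse times the other slice is a nonzero nilpotent, i.e.\ rank $3$, not rank $\le2$. In fact the complex half of condition (3) is false, so no supplement can close the gap. Concretely, let $A=\begin{pmatrix}0&1\\ 1&i\end{pmatrix}$, $B=\begin{pmatrix}1&i\\ 0&0\end{pmatrix}$; this is $(E,Q,E)\cdot(E;S)$ with $Q=\begin{pmatrix}0&1\\ 1&i\end{pmatrix}$ and $S=\begin{pmatrix}0&1\\ 0&0\end{pmatrix}$, hence has rank $3$ (equivalently, $A^{-1}B=\begin{pmatrix}-i&1\\ 1&i\end{pmatrix}$ is a nonzero nilpotent). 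Yet $\Delta(A;B)=0$ and $\Theta(T)=\det(\aaa_1,\bbb_1)+\det(\aaa_2,\bbb_2)=-1+1=0$, while (1), (2), (4) all fail. Note that the paper's own proof never establishes this implication either: it proves ``rank $\le2$ iff (1) or (4)'' when the pencil $xA+yB$ contains a nonsingular matrix, and meets conditions (2), (3) only inside the case $\det(xA+yB)\equiv0$, where they arise as necessary conditions; the ``if'' direction for (3) in the nonsingular-pencil case is silently skipped, and your counter-reasoning shows it cannot hold over $\CCC$. (This also undermines the ``only if'' half of Corollary~\ref{cor:2x2x2}\eqref{cor:2x2x2:complex} and its use in Proposition~\ref{prop:exceptnone}.)

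Everything else in your sketch is correct, and over $\RRR$ it is actually more complete than the paper's argument: the easy implications (1), (2), (4) are handled as the paper does (zeroing a slice, swapping the second and third modes, splitting the pencil at two distinct singular points); your converse, via $\Delta=0$ plus the double-zero/nilpotency argument on $xM_1+yM_2$, is sound; and your real proof of ``(3) $\Rightarrow$ rank $\le2$'' is valid, since over $\RRR$ the identity $\Delta=(\det(\aaa_1,\bbb_2)-\det(\bbb_1,\aaa_2))^2+4(\det M_1)^2$ really does force $\det(xM_1+yM_2)\equiv0$, and the common-kernel/common-image dichotomy for a two-dimensional space of singular $2\times2$ matrices that you defer is classical and is a two-line adjugate computation ($\det(X+Y)-\det X-\det Y=\tr(\mathrm{adj}(X)Y)$), so deferring it is harmless. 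Thus your write-up would stand as a proof of the real statement; the complex clause (3) of the theorem itself needs to be corrected (e.g.\ replaced by the condition that $\det(xM_1+yM_2)$, or equivalently $\det(xA+yB)$, vanishes identically) before any proof can be completed there.
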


\begin{proof}
First suppose that $|x_1A+y_1B|\ne0$ for some $x_1$ and $y_1$.
There are $x_2$ and $y_2$ such that $\left|\begin{matrix} x_1&x_2\\ y_1&y_2\end{matrix}\right|\ne 0$ and $(A;B)$ is equivalent to $(x_1A+y_1B;x_2A+y_2B)$.
$\rank(A;B)\leq 2$ is equivalent to that $(x_1A+y_1B)^{-1}(x_2A+y_2B)$ is diagonalizable.
It is equivalent to
\begin{itemize}
\item[(i)]  $x_2A+y_2B=\alpha (x_1A+y_1B)$ for some $\alpha$, or
\item[(ii)]  all eigenvalues of $(x_1A+y_1B)^{-1}(x_2A+y_2B)$ lie in $\FFF$ and are distinct.
\end{itemize}
We have (i) $\Leftrightarrow$ \eqref{thm:2x2x2:parallel1}, and
(ii) $\Leftrightarrow$ $\Delta(x_1A+y_1B;x_2A+y_2B)$ is positive 
(resp. nonzero)
$\Leftrightarrow$ $\Delta(A;B)$ is positive (resp. nonzero).
\par
Next suppose that $|xA+yB|=0$ for any $x$ and $y$.
Then we have
$$\rank(A;B)\leq \rank(A)+\rank(B)\leq 1+1=2.$$
We see that $|xA+yB|=0$ for any $x$ and $y$ if and only if
$|A|=|B|=|\aaa_1,\bbb_2|+|\bbb_1,\aaa_2|=0$, since
$$|x\aaa_1+y\bbb_1,x\aaa_2+y\bbb_2|=x^2|\aaa_1,\aaa_2|+xy(|\aaa_1,\bbb_2|+|\bbb_1,\aaa_2|)+y^2|\bbb_1,\bbb_2|.$$
Thus, $|xA+yB|=0$ for any $x$ and $y$ if and only if $|A|=|B|=\Delta(A;B)=0$.
We divide into four cases: 
\begin{itemize}
\item[(a)] $\aaa_2=\alpha\aaa_1$, $\bbb_2=\beta\bbb_1$ for some $\alpha$ and $\beta$; 
\item[(b)] $\aaa_1\ne\zerovec$, $\aaa_2=\alpha\aaa_1$ for some $\alpha$, $\bbb_1=\zerovec$, $\bbb_2\ne\zerovec$; 
\item[(c)] $\aaa_1=\zerovec$, $\aaa_2\ne\zerovec$, $\bbb_1\ne\zerovec$, $\bbb_2=\beta\bbb_1$ for some $\beta$; and
\item[(d)] $\aaa_1=\zerovec$, $\aaa_2\ne\zerovec$, $\bbb_1=\zerovec$, $\bbb_2\ne\zerovec$.
\end{itemize}
(a) Since $|\aaa_1,\bbb_2|+|\bbb_1,\aaa_2|=(\beta-\alpha)|\aaa_1,\bbb_1|$,
if $|\aaa_1,\bbb_1|=0$ implies \eqref{thm:2x2x2:same} and otherwise $(\aaa_2,\bbb_2)=\alpha(\aaa_1,\bbb_1)$.
(b) Since $|\aaa_1,\bbb_2|+|\bbb_1,\aaa_2|=|\aaa_1,\bbb_2|$, we have
$|\aaa_1,\bbb_1|+|\aaa_2,\bbb_2|=\alpha|\aaa_1,\bbb_2|=0$ which implies \eqref{thm:2x2x2:same}.
(c) Since $|\aaa_1,\bbb_2|+|\bbb_1,\aaa_2|=|\bbb_1,\aaa_2|$, we have
$|\aaa_1,\bbb_1|+|\aaa_2,\bbb_2|=\beta|\bbb_1,\aaa_2|=0$ which implies \eqref{thm:2x2x2:same}.
(d) If $|\aaa_2,\bbb_2|=0$ implies \eqref{thm:2x2x2:same} and otherwise $(\aaa_1,\bbb_1)=O$ which implies \eqref{thm:2x2x2:parallel2}.
\end{proof}

We define a function $\Theta\colon \FFF^{2\times2\times2} \to \FFF$ by
$\Theta((\aaa_1,\aaa_2);(\bbb_1,\bbb_2))=|\aaa_1,\bbb_1|+|\aaa_2,\bbb_2|$.
We have the following corollary by Theorem~\ref{thm:2x2x2}.

\begin{cor} \label{cor:2x2x2}
Let $T=((\aaa_1,\aaa_2);(\bbb_1,\bbb_2))$ be a $2\times 2\times 2$ tensor.
\begin{enumerate}
\item \label{cor:2x2x2:complex}
A complex tensor $T$ has rank three if and only if
$\dim\langle \begin{pmatrix} \aaa_1\\ \aaa_2\end{pmatrix}, \begin{pmatrix} \bbb_1\\ \bbb_2\end{pmatrix}\rangle=\dim\langle \begin{pmatrix} \aaa_1\\ \bbb_1\end{pmatrix}, \begin{pmatrix} \aaa_2\\ \bbb_2\end{pmatrix}\rangle=2$, $\Delta(T)=0$ and $\Theta(T)\ne0$.
\item \label{cor:2x2x2:real}
A real tensor $T$ has rank three if and only if
$\Delta(T)<0$, or $\dim\langle \begin{pmatrix} \aaa_1\\ \aaa_2\end{pmatrix}, \begin{pmatrix} \bbb_1\\ \bbb_2\end{pmatrix}\rangle=\dim\langle \begin{pmatrix} \aaa_1\\ \bbb_1\end{pmatrix}, \begin{pmatrix} \aaa_2\\ \bbb_2\end{pmatrix}\rangle=2$, $\Delta(T)=0$ and $\Theta(T)\ne0$.
\end{enumerate}
\end{cor}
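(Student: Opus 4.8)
The plan is to derive the corollary from Theorem~\ref{thm:2x2x2} by contraposition. Since the maximal rank of a $2\times2\times2$ tensor over $\FFF$ is $3$, we have $\rank_\FFF(T)=3$ if and only if $\rank_\FFF(T)\not\le2$, and by Theorem~\ref{thm:2x2x2} the latter means that \emph{none} of the four conditions \eqref{thm:2x2x2:parallel1}--\eqref{thm:2x2x2:different} holds. So the whole task is to rewrite the negations of these four conditions in the form stated in the corollary.

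First I would match the negations of \eqref{thm:2x2x2:parallel1} and \eqref{thm:2x2x2:parallel2} with the two dimension conditions. Stacking $\aaa_1$ over $\aaa_2$ and $\bbb_1$ over $\bbb_2$ produces vectors $\begin{pmatrix}\aaa_1\\\aaa_2\end{pmatrix},\begin{pmatrix}\bbb_1\\\bbb_2\end{pmatrix}\in\FFF^4$, and $\alpha A+\beta B=O$ holds for some $(\alpha,\beta)\ne(0,0)$ exactly when these two vectors are linearly dependent; hence the negation of \eqref{thm:2x2x2:parallel1} is $\dim\langle\begin{pmatrix}\aaa_1\\\aaa_2\end{pmatrix},\begin{pmatrix}\bbb_1\\\bbb_2\end{pmatrix}\rangle=2$. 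The identical argument applied to the matrices $(\aaa_1,\bbb_1)$ and $(\aaa_2,\bbb_2)$, which are the slices of $T$ along the second index, turns the negation of \eqref{thm:2x2x2:parallel2} into $\dim\langle\begin{pmatrix}\aaa_1\\\bbb_1\end{pmatrix},\begin{pmatrix}\aaa_2\\\bbb_2\end{pmatrix}\rangle=2$. Finally, by the definition of $\Theta$ we have $\Theta(T)=\det(\aaa_1,\bbb_1)+\det(\aaa_2,\bbb_2)$, so condition \eqref{thm:2x2x2:same} reads ``$\Delta(T)=0$ and $\Theta(T)=0$'', with negation ``$\Delta(T)\ne0$ or $\Theta(T)\ne0$''.

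It then remains to combine these with the negation of \eqref{thm:2x2x2:different}. Over $\CCC$ that negation is $\Delta(T)=0$, and conjoining all four negations, then using $\Delta(T)=0$ to reduce ``$\Delta(T)\ne0$ or $\Theta(T)\ne0$'' to ``$\Theta(T)\ne0$'', gives exactly statement~\eqref{cor:2x2x2:complex}. Over $\RRR$ the negation of \eqref{thm:2x2x2:different} is $\Delta(T)\le0$, so I would split on the sign of $\Delta(T)$: when $\Delta(T)=0$ the same bookkeeping yields the second alternative of statement~\eqref{cor:2x2x2:real}, and when $\Delta(T)<0$ it yields the first. The one point that is not pure bookkeeping, and hence the main thing to be careful about, is that over $\RRR$ the bare inequality $\Delta(T)<0$ already forces rank three, with no need to re-impose the dimension conditions; this is exactly Proposition~\ref{prop:Silva-etal}(3), and if one prefers to argue only through Theorem~\ref{thm:2x2x2} one instead notes that $\Delta(T)\ne0$ itself forces both dimension conditions, since linear dependence of $A$ and $B$ gives $\Delta(A;B)=0$ by Proposition~\ref{prop:Silva}, and linear dependence of $(\aaa_1,\bbb_1)$ and $(\aaa_2,\bbb_2)$ makes the right-hand side of the identity $\Delta(A;B)=(\det(\aaa_1,\bbb_2)-\det(\bbb_1,\aaa_2))^2-4\det(\aaa_1,\bbb_1)\det(\aaa_2,\bbb_2)$ recorded just before Theorem~\ref{thm:2x2x2} vanish by a one-line substitution.
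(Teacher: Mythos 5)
Your proposal is correct and follows essentially the same route as the paper, which states the corollary as an immediate consequence of Theorem~\ref{thm:2x2x2}: negate the four conditions (the two linear-dependence conditions become the dimension conditions, condition \eqref{thm:2x2x2:same} becomes the $\Delta$--$\Theta$ clause, and condition \eqref{thm:2x2x2:different} is handled separately over $\CCC$ and $\RRR$). Your extra remark that $\Delta(T)<0$ alone suffices in the real case (via Proposition~\ref{prop:Silva-etal}(3), or because $\Delta\ne0$ already rules out the degenerate conditions) is exactly the point needed to match the stated form, and it is handled correctly.
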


We put $A\cdot B=\det(A+B)-\det(A)-\det(B)$.

\begin{lem}[{\cite[Lemma~1]{Coolsaet:2012}}]
Let $\mathbb{K}$ be a field and $A, B \in \mathbb{K}^{2\times 2}$. 
Then $(A ; B)$ is nonsingular if and only if $\det(A)\ne 0$ and the
quadratic equation
$(\det A)x^2 - (A \cdot B)x + (\det B) = 0$,
has no solutions for $x$ in $\mathbb{K}$.
Equivalently, $(A ; B)$ is nonsingular if and only if $A$ is nonsingular and the eigenvalues
of $BA^{-1}$ do not belong to $\mathbb{K}$.
\end{lem}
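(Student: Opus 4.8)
The main tool is the homogenization identity
$$\det(xA+yB)=x^2\det A+xy(A\cdot B)+y^2\det B,$$
valid for all $A,B\in\mathbb{K}^{2\times2}$: it is just the expansion of a $2\times2$ determinant, and it exhibits $A\cdot B$ as the cross term of the binary quadratic form $\det(xA+yB)$. Recall that, by definition, a $2\times2\times2$ tensor $(A;B)$ is nonsingular when every nonzero member of the pencil is invertible, i.e. $\det(xA+yB)\ne0$ for all $(x,y)\in\mathbb{K}^2\setminus\{(0,0)\}$. Set $q(x)=(\det A)x^2-(A\cdot B)x+(\det B)$, so that the identity with $y=-1$ reads $q(x)=\det(xA-B)$.

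First I would settle the equivalence with the quadratic condition. For the forward implication, assume $(A;B)$ is nonsingular: evaluating the pencil at $(x,y)=(1,0)$ gives $\det A\ne0$, and for every $x\in\mathbb{K}$ the pair $(x,-1)$ is nonzero, so $q(x)=\det(xA-B)\ne0$; hence $q$ has no root in $\mathbb{K}$. For the converse, assume $\det A\ne0$ and $q$ has no root in $\mathbb{K}$, and take $(x,y)\ne(0,0)$. If $y=0$ then $\det(xA+yB)=x^2\det A\ne0$; if $y\ne0$ then $\det(xA+yB)=y^2\,q(-x/y)\ne0$ since $-x/y\in\mathbb{K}$. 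So every nonzero member of the pencil is invertible, i.e. $(A;B)$ is nonsingular. (In particular $\det B=q(0)\ne0$ then comes for free, which is why it need not be assumed.)

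For the ``equivalently'' clause: when $A$ is invertible,
$$\det(\lambda E-BA^{-1})=\det\bigl((\lambda A-B)A^{-1}\bigr)=q(\lambda)/\det A,$$
so the characteristic polynomial of $BA^{-1}$ equals $q$ up to the nonzero scalar $\det A$. Hence $BA^{-1}$ has an eigenvalue in $\mathbb{K}$ if and only if $q$ has a root in $\mathbb{K}$, and combining this with the first equivalence yields the second. As a consistency check, the discriminant of $q$ is $(A\cdot B)^2-4\det A\det B=\Delta(A;B)$, so in characteristic $\ne2$ the condition ``$q$ has no root'' coincides with ``$\Delta(A;B)$ is not a square'', in agreement with the remark made earlier in this section.

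There is no serious obstacle: the whole argument is the homogenization identity together with some bookkeeping. The only points that need a little care are making the definition of nonsingularity explicit in terms of the pencil, treating the direction $(1,0)$ — which forces $\det A\ne0$ — separately from the affine part of the pencil parametrized by $x\in\mathbb{K}$, and observing that $\det B\ne0$ is then automatic rather than an extra hypothesis.
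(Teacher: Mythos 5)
Your proposal is correct, but there is nothing in the paper to compare it against: the lemma is quoted verbatim from Coolsaet (Lemma~1 of \cite{Coolsaet:2012}) and the paper supplies no proof of its own, relying on the citation. Your argument is essentially the standard (and, in substance, Coolsaet's) one: the polarization identity $\det(xA+yB)=x^2\det A+xy(A\cdot B)+y^2\det B$, the observation that $q(x)=\det(xA-B)$ exhausts the pencil together with the direction $(1,0)$, and $\det(\lambda E-BA^{-1})=q(\lambda)/\det A$ for the eigenvalue reformulation; all steps check out, including the remark that $\det B=q(0)\ne0$ is automatic. The only point worth flagging is that the paper never states the definition of nonsingularity of a $2\times2\times2$ tensor; you correctly adopt the contraction/pencil definition ($\det(xA+yB)\ne0$ for all $(x,y)\ne(0,0)$), which is the one used in \cite{Coolsaet:2012}, and under that convention your proof is complete. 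Your closing consistency check, that the discriminant of $q$ is $(A\cdot B)^2-4\det A\det B=\Delta(A;B)$, correctly ties the lemma to the paper's earlier remark that (in characteristic $\ne2$) nonsingularity is equivalent to $\Delta$ not being a square.
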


\section{Theoretical results}

We refer to the paper \cite{Friedland:2008} by Friedland.
He wrote properties for $3$-tensors but
almost all properties with respect to the map $f_k$ canonically hold for $n$-tensors in general.

\begin{thm}[cf. {\cite[Theorem~7.1]{Friedland:2008}}]
\label{typical rank by Friedland}
The space $\mathcal{T}:=\RRR^{m_1\times\cdots\times m_n}$, $m_1,\ldots,,m_n \in \NNN$, contains a finite number of open connected
disjoint semi-algebraic sets $O_1, \ldots, O_M$ satisfying the following properties.
\begin{enumerate}
\item $\mathcal{T} \smallsetminus \cup_{i=1}^M O_i$ is a closed semi-algebraic set of $\mathcal{T}$ of dimension strictly less than $m_1\cdots m_n=\dim\mathcal{T}$.

\item Each $T \in O_i$ has rank $r_i$ for $i = 1,\ldots, M$.
\item $\min(r_1, \ldots, r_M) = grank(m_1,\ldots,m_n)$.
\item $mtrank(m_1,\ldots,m_n) := \max(r_1,\ldots, r_M)$ is the minimal $k \in \NNN$ such that the closure of $f_k((\RRR^{m_1}\times\cdots\times \RRR^{m_n})^k)$ is equal to $\mathcal{T}$.
\item For each integer $r \in [\grank(m_1,\ldots,m_n), \mtrank(m_1,\ldots,m_n)]$ there exists $r_i = r$ for some integer $i \in [1,M]$.
\end{enumerate}
\end{thm}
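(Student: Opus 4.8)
The plan is to run the semialgebraic stratification argument of \cite[Theorem~7.1]{Friedland:2008} and to check that each property of the tensor-rank map it invokes is insensitive to the number of factors $n$. Put $N=m_1\cdots m_n=\dim\mathcal{T}$ and, for $k\ge0$, let $f_k\colon(\RRR^{m_1}\times\cdots\times\RRR^{m_n})^k\to\mathcal{T}$ send a family $(\xxx^{(1)}_j,\ldots,\xxx^{(n)}_j)$, $j=1,\ldots,k$, to $\sum_{j=1}^k\xxx^{(1)}_j\otimes\cdots\otimes\xxx^{(n)}_j$, so that its image $R_k$ is exactly $\{T\in\mathcal{T}:\rank_\RRR(T)\le k\}$. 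Since $f_k$ is polynomial, $R_k$ is semialgebraic by the Tarski--Seidenberg theorem, and hence so is each ``rank exactly $r$'' set $S_r:=R_r\smallsetminus R_{r-1}$; writing $r_{\max}$ for the maximal rank we obtain an ascending chain $R_0\subseteq R_1\subseteq\cdots\subseteq R_{r_{\max}}=\mathcal{T}$.

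First I would take a semialgebraic cell decomposition of $\mathcal{T}$ compatible with the finite family $\{S_0,\ldots,S_{r_{\max}}\}$ --- so that $\rank_\RRR$ is constant on each cell --- and let $O_1,\ldots,O_M$ be the connected components of the open sets $\mathrm{int}(S_r)$, $0\le r\le r_{\max}$. These are finitely many open, connected, pairwise disjoint semialgebraic sets with $\rank_\RRR\equiv r_i$ on $O_i$, which is property~(2). For property~(1), the complement $\mathcal{T}\smallsetminus\bigcup_iO_i=\bigcup_r\bigl(S_r\smallsetminus\mathrm{int}(S_r)\bigr)$ is closed and semialgebraic, and each $S_r\smallsetminus\mathrm{int}(S_r)$ has dimension $<N$ (clear when $\dim S_r<N$, and when $\dim S_r=N$ because a full-dimensional semialgebraic set has nowhere-dense frontier). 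Property~(3) is essentially the definition of $\grank(m_1,\ldots,m_n)$ as the rank attained on a nonempty open subset of $\mathcal{T}$: since $\bigcup_iO_i$ is dense, that least rank is $\min_ir_i$. For property~(4), $\overline{R_k}=\mathcal{T}$ iff $R_k$ is dense, and since $O_j\cap R_k=\emptyset$ whenever $r_j>k$ (as $O_j\subseteq S_{r_j}$) while $\bigcup_iO_i\subseteq R_{\max_ir_i}$ is dense, this happens precisely when $k\ge\max_ir_i$; hence $\mtrank(m_1,\ldots,m_n):=\max_ir_i$ is the least $k$ with $\overline{f_k((\RRR^{m_1}\times\cdots\times\RRR^{m_n})^k)}=\mathcal{T}$.

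The genuine content, and the step I expect to cause the most trouble, is property~(5): the typical ranks $\{r_1,\ldots,r_M\}$ form a set of consecutive integers. Here I would follow the connectedness argument in Friedland's proof. If some integer $r$ with $\grank\le r<\mtrank$ were not typical, then $R_r$ is full-dimensional, so $\mathrm{int}(R_r)\ne\emptyset$, while $\overline{R_r}\ne\mathcal{T}$ (by the proof of~(4)), so $\mathcal{T}\smallsetminus\overline{R_r}$ is also nonempty and open; the frontier separating these two open sets is controlled by the lower-dimensional set $\overline{S_r}$, and tracing the ranks of the cells that a generic segment joining a point of $\mathrm{int}(R_r)$ to a point of $\mathcal{T}\smallsetminus\overline{R_r}$ must pass through --- together with the connectedness of $\mathcal{T}=\RRR^N$ --- forces some $r_i$ to equal $r$, a contradiction. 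Carrying out this bookkeeping carefully is the crux. Since the argument uses nothing about $f_k$ beyond its being polynomial with image the rank-$\le k$ locus and having closures that increase with $k$ and first exhaust $\mathcal{T}$ at $k=\mtrank$, Friedland's proof for $n=3$ applies verbatim to general $n$.
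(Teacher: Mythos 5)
The paper itself gives no proof of this theorem: it is quoted from Friedland's Theorem~7.1 with only the preceding remark that the properties of the map $f_k$ established there for $3$-tensors ``canonically hold for $n$-tensors,'' which is exactly the strategy you adopt. Your sketch (semialgebraicity of the rank-$\leq k$ loci via Tarski--Seidenberg, the stratification giving (1)--(4), and deferring the consecutiveness property (5) to Friedland's connectedness argument after checking that nothing depends on $n=3$) is thus essentially the same approach as the paper, and in fact supplies more detail than the paper does.
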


We call the number $r_i$, $i\in [1,M]$ a typical rank of $\mathcal{T}$.

We denote by $\mathcal{T}_{\leq p}$ the subset of all tensors with rank less than or equal to $p$ of $\mathcal{T}$.

\begin{thm} \label{thm:judge typical rank}
Let $\mathcal{T}:=\RRR^{m_1\times\cdots\times m_n}$, $p\in \NNN$ and $L$ a closed semi-algebraic set of dimension less than $\dim\mathcal{T}$.
Let $f\colon \mathcal{T}\smallsetminus L \to \RRR$ be a continuous map such that
$f(T)\geq 0$ for any $T$ and $f(T)=0$ for $T\in \mathcal{T}_{\leq p}$.
If $f$ is not zero, then there exists a typical rank $q$ of $\mathcal{T}$ with $q>p$.
\end{thm}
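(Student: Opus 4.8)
The plan is to use Theorem~\ref{typical rank by Friedland} to exhibit a single tensor that simultaneously lies in one of the cells $O_i$ and has $f>0$; the rank $r_i$ of that cell will be the desired typical rank $q>p$. The whole argument is a density/dimension-counting argument: the set where $f$ can fail to detect the situation (namely $L$ together with $\mathcal{T}\smallsetminus\bigcup O_i$) is too small to swallow the open set on which $f$ is positive.

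In detail, I would first argue that, since $f$ is not the zero map and $f\geq 0$, there is a point $T_0\in\mathcal{T}\smallsetminus L$ with $f(T_0)>0$. As $L$ is closed, $\mathcal{T}\smallsetminus L$ is open in $\mathcal{T}$, so by continuity of $f$ there is an open neighbourhood $U\subseteq\mathcal{T}$ of $T_0$ (open in $\mathcal{T}$) on which $f>0$; in particular $U$ is nonempty open, hence $\dim U=\dim\mathcal{T}$. Next set $L':=L\cup(\mathcal{T}\smallsetminus\bigcup_{i=1}^M O_i)$. By Theorem~\ref{typical rank by Friedland}(1) and the hypothesis on $L$, $L'$ is a closed semi-algebraic subset of $\mathcal{T}$ of dimension strictly less than $\dim\mathcal{T}$. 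Therefore $U\not\subseteq L'$, so we may choose $T_1\in U\smallsetminus L'$; since $T_1\notin\mathcal{T}\smallsetminus\bigcup_i O_i$, we have $T_1\in O_i$ for some $i$, and hence $\rank(T_1)=r_i$ by Theorem~\ref{typical rank by Friedland}(2).

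Finally, $f(T_1)>0$ because $T_1\in U$, whereas $f$ vanishes on $\mathcal{T}_{\leq p}$ (intersected with the domain $\mathcal{T}\smallsetminus L$ of $f$); thus $T_1\notin\mathcal{T}_{\leq p}$, i.e.\ $r_i=\rank(T_1)>p$. Putting $q:=r_i$ yields a typical rank of $\mathcal{T}$ with $q>p$, which is what we wanted. The one step deserving genuine care — and the only place the dimension hypotheses on $L$ and on $\mathcal{T}\smallsetminus\bigcup_i O_i$ are actually used — is the claim that a nonempty open subset of $\mathcal{T}$ cannot be contained in a semi-algebraic set of strictly smaller dimension; this is standard (a nonempty open set is full-dimensional, and dimension is monotone under inclusion for semi-algebraic sets), so no serious obstacle remains.
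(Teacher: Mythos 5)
Your proof is correct and is essentially the paper's argument run in the contrapositive direction: where the paper assumes no typical rank exceeds $p$ and uses Theorem~\ref{typical rank by Friedland} to conclude that $f$ vanishes on a dense open subset of its domain and hence, by continuity, identically, you start from a point with $f>0$, pass to an open neighbourhood, and use the same dimension count on $L\cup\bigl(\mathcal{T}\smallsetminus\bigcup_{i}O_i\bigr)$ to land in a cell $O_i$ with $r_i>p$. The ingredients (Friedland's decomposition, negligibility of the exceptional set, continuity of $f$) are identical, so this is the same approach in direct rather than contradiction form.
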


\begin{proof}
Suppose that there does not exist a typical rank $q$ of $\mathcal{T}$ with $q>p$.  Then $p$ is greater than or equal to the maximal typical rank of $\mathcal{T}$. By Theorem~\ref{typical rank by Friedland}, there is an open dense semi-algebraic set $O$ of $\mathcal{T}$ such that $O\subset \mathcal{T}_{\leq p}$
and then $O\smallsetminus L$ is also an open dense semi-algebraic set of $\mathcal{T}$.
Since $f(T)=0$ for any $T\in O\smallsetminus L$ and $f$ is continuous, we have
$f$ must be a constant zero map.
\end{proof}

\begin{cor}
Let $p$ be a typical rank of $\mathcal{T}$.
If there is a nonzero map in Theorem~\ref{thm:judge typical rank}, then
$p+1$ is a typical rank of $\mathcal{T}$.
\end{cor}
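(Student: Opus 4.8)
The plan is to deduce this directly from Theorem~\ref{thm:judge typical rank} together with the ``no-gap'' clause (5) of Theorem~\ref{typical rank by Friedland}. First I would apply Theorem~\ref{thm:judge typical rank} to the given nonzero map $f$: its conclusion produces a typical rank $q$ of $\mathcal{T}$ with $q>p$.

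Next I would locate both $p$ and $q$ inside the interval $[\grank(m_1,\ldots,m_n),\mtrank(m_1,\ldots,m_n)]$. Since $p$ is a typical rank, $p=r_i$ for some $i\in[1,M]$, so clause (3) of Theorem~\ref{typical rank by Friedland} gives $\grank(m_1,\ldots,m_n)=\min_j r_j\le p$, and clause (4) gives $q\le\mtrank(m_1,\ldots,m_n)=\max_j r_j$. Combining these with $q>p$ yields
\[
\grank(m_1,\ldots,m_n)\le p<p+1\le q\le\mtrank(m_1,\ldots,m_n),
\]
so $p+1$ is an integer lying in $[\grank(m_1,\ldots,m_n),\mtrank(m_1,\ldots,m_n)]$. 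Finally, clause (5) of Theorem~\ref{typical rank by Friedland} asserts that every integer in this interval equals some $r_i$, i.e.\ is a typical rank of $\mathcal{T}$; applying it with $r=p+1$ completes the argument.

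I do not expect a genuine obstacle: the statement is essentially bookkeeping layered on top of the two theorems already in hand. The only point needing a little care is verifying that $p+1$ does not exceed $\mtrank(m_1,\ldots,m_n)$, and this is exactly where the strict inequality $q>p$ furnished by Theorem~\ref{thm:judge typical rank} is used; without strictness one could have $q=p$ and learn nothing new. One should also make explicit that ``a nonzero map in Theorem~\ref{thm:judge typical rank}'' refers to a continuous $f\colon\mathcal{T}\smallsetminus L\to\RRR$ with $f\ge0$ everywhere, $f$ vanishing on $\mathcal{T}_{\le p}$, and $L$ a closed semi-algebraic set of dimension less than $\dim\mathcal{T}$, so that the corollary's hypothesis is unambiguous and matches the theorem being invoked.
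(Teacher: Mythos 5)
Your argument is correct and is exactly the intended one: the paper states this corollary without a separate proof precisely because it follows immediately from Theorem~\ref{thm:judge typical rank} (giving a typical rank $q>p$) combined with clause (5) of Theorem~\ref{typical rank by Friedland}, using that $p$ itself is typical so that $\grank\le p<p+1\le q\le\mtrank$. Your bookkeeping, including the role of the strict inequality $q>p$ and of the hypothesis that $p$ is typical, matches the paper's reasoning.
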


\section{$2\times 2\times 2\times 2$ tensors}

Let $\underline{T}=(t_{ijk\ell})$ be a $2\times 2\times2\times 2$ tensor. Put $T_{k\ell}=\begin{pmatrix} t_{11k\ell}& t_{12k\ell}\\
t_{21k\ell} & t_{22k\ell}\end{pmatrix}$ for $k,\ell=1,2$.
We describe $\underline{T}$ as 
$$\begin{array}{c|c}
T_{11} & T_{12}\vrule width0pt depth8pt\\
\hline
T_{21} & T_{22}\vrule width0pt height12pt
\end{array},\quad \begin{array}{c|c}
T_{\cdot1} & T_{\cdot2}
\end{array},\quad\text{or}\quad \begin{array}{c}
T_{1\cdot} \vrule width0pt depth8pt\\
\hline
T_{2\cdot} \vrule width0pt height12pt
\end{array}.$$
Let $G_1,\ldots,G_4=\GL(2,\CCC)$.
The action of $\GL(2,\CCC)^4$ is as follows.
$(P_1,P_2,E,E)\cdot \underline{T}$ is given by
$$\begin{array}{c|c}
P_1T_{11}P_2^\top & P_1T_{12}P_2^\top\vrule width0pt depth8pt\\
\hline
P_1T_{21}P_2^\top & P_1T_{22}P_2^\top\vrule width0pt height12pt
\end{array},$$
$(E,E,\begin{pmatrix} p_{11}&p_{12}\\ p_{21}&p_{22}\end{pmatrix},E)\cdot (T_{\cdot1}|T_{\cdot2})$ is given by
$$\begin{array}{c|c}
p_{11}T_{\cdot1}+p_{12}T_{\cdot2}&p_{21}T_{\cdot1}+p_{22}T_{\cdot2}\vrule width0pt depth8pt
\end{array},$$
and $(E,E,E,\begin{pmatrix} q_{11}&q_{12}\\ q_{21}&q_{22}\end{pmatrix})\cdot \left(\begin{array}{c} T_{1\cdot}\vrule width0pt depth5pt\\ \hline T_{2\cdot}\vrule width0pt height12pt\end{array}\right)$ is given by
$$\begin{array}{c} q_{11}T_{1\cdot}+q_{12}T_{2\cdot}\vrule width0pt depth8pt\\ \hline q_{21}T_{1\cdot}+q_{22}T_{2\cdot}\vrule width0pt height12pt\end{array}.$$
We also denote $\underline{T}$ by  $((T_{11};T_{12});(T_{21};T_{22}))$.

\begin{prop} \label{prop:action}
Let $g\in \GL(2,\FFF)^4$ and $\underline{T}^\prime=g\cdot \underline{T}$.
Suppose that $\rank_{\FFF}(T_{1\cdot})=3$.
If $\rank_{\FFF}(T^\prime_{1\cdot})\leq2$ then there is $x\in\FFF$ such that
$\rank_{\FFF}(xT_{1\cdot}+T_{2\cdot})\leq 2$.
\end{prop}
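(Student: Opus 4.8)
The plan is to factor the group element so that the decomposition $\underline{T}=(T_{1\cdot};T_{2\cdot})$ meets only one of the four $\GL(2,\FFF)$-factors head on. Write $g=(P_1,P_2,R,Q)$ with $Q=(q_{ij})$. Because the four factors act on independent slots, their actions commute, so I first apply $(E,E,E,Q)$ and then $(P_1,P_2,R,E)$; from the displayed formula for the fourth factor, the first slab of $(E,E,E,Q)\cdot\underline{T}$ is $q_{11}T_{1\cdot}+q_{12}T_{2\cdot}$.

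Next I note that $(P_1,P_2,R,E)$ does not mix the two slabs: by the displayed formulas for the first three factors it acts on each slab $(T_{k1};T_{k2})$ separately, and this action is precisely the standard $\GL(2,\FFF)^3$-action on a $2\times2\times2$ tensor introduced in Section~2. Hence $T'_{1\cdot}=(P_1,P_2,R)\cdot(q_{11}T_{1\cdot}+q_{12}T_{2\cdot})$, and since the rank of a $2\times2\times2$ tensor is invariant under the $\GL(2,\FFF)^3$-action,
$$\rank_{\FFF}(T'_{1\cdot})=\rank_{\FFF}(q_{11}T_{1\cdot}+q_{12}T_{2\cdot}).$$

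To finish, suppose $\rank_{\FFF}(T'_{1\cdot})\le2$. If $q_{12}\ne0$, then $x:=q_{11}/q_{12}$ does the job, because rank is unchanged under nonzero scaling: $\rank_{\FFF}(xT_{1\cdot}+T_{2\cdot})=\rank_{\FFF}(q_{11}T_{1\cdot}+q_{12}T_{2\cdot})\le2$. If $q_{12}=0$, then invertibility of $Q$ forces $q_{11}\ne0$, so $\rank_{\FFF}(T_{1\cdot})=\rank_{\FFF}(q_{11}T_{1\cdot})\le2$, contradicting the standing hypothesis $\rank_{\FFF}(T_{1\cdot})=3$; hence this case cannot occur.

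The argument is mostly bookkeeping, and I expect no serious obstacle. The one step that deserves care is checking that $P_1,P_2,R$ restrict to the usual $\GL(2,\FFF)^3$-action on each horizontal slab -- that is, matching the four-factor action formulas to the three-factor formula of Section~2 -- so that they preserve the rank of a slab; the only other point to track is the degenerate case $q_{12}=0$, which is ruled out precisely by $\rank_{\FFF}(T_{1\cdot})=3$.
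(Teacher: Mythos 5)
Your proof is correct and is essentially the paper's own argument: both factor $g$ through $(E,E,E,Q)$, observe that the remaining three factors act slab-wise as the rank-preserving $\GL(2,\FFF)^3$-action, so $\rank(T'_{1\cdot})=\rank(q_{11}T_{1\cdot}+q_{12}T_{2\cdot})$, and then rule out $q_{12}=0$ using $\rank(T_{1\cdot})=3$. No gaps.
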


\begin{proof}
Suppose that $\rank_{\FFF}(T^\prime_{1\cdot})\leq2$ for $g=(g_1,g_2,g_3,g_4)$.
Then $\rank(T^{\prime\prime}_{1\cdot})=\rank_{\FFF}(T^\prime_{1\cdot})$, where $\underline{T}^{\prime\prime}=(E,E,E,g_4)\cdot \underline{T}$. 
Thus there is $(x_1,x_2)\ne(0,0)$ such that
$\rank(x_1T_{1\cdot}+x_2T_{2\cdot})=\rank(T^\prime_{1\cdot})$.
Note that $\rank(yS_{1\cdot})=\rank(S_{1\cdot})$ for any $y\ne 0$, where $\underline{S}=(g_1,g_2,g_3,E)\cdot \underline{T}$.
Since $\rank(T^{\prime\prime}_{1\cdot})\leq 2$, we have $x_2\ne 0$
and then $\rank(\frac{x_1}{x_2}T_{1\cdot}+T_{2\cdot})=\rank(T^\prime_{1\cdot})\leq2$.
\end{proof}

\begin{lem} \label{lem:onewayseparate}
Let $A$ and $B$ be real (resp. complex) $2\times 2\times 2$ tensors.
Suppose that $A=T_1+T_2$ for some rank one tensors $T_1$ and $T_2$.
If $\Delta(B+xT_1)$ is positive (resp. nonzero) for some $x$, then $\rank(A;B)\leq 4$.
\end{lem}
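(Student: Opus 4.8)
The plan is to split the $4$-tensor $(A;B)$ so that only one of the two rank-one summands of $A$ is kept in a slab of its own, while the other is absorbed together with $B$; the hypothesis on $\Delta$ is then exactly what keeps the rank of that second slab down to $3$. Since $A=T_1+T_2$, I would start from the decomposition
$$(A;B)=(T_2;O)+(T_1;B),$$
where $O$ is the zero $2\times2\times2$ tensor. The tensor $(T_2;O)$ is $T_2$ tensored with the first standard basis vector in the fourth factor, so $\rank_\FFF(T_2;O)\le\rank_\FFF(T_2)\le 1$; hence it suffices to prove $\rank_\FFF(T_1;B)\le 3$.

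To bound $\rank_\FFF(T_1;B)$ I would use two ingredients. First, the sub-additivity $\rank_\FFF(C;D)\le\rank_\FFF(C)+\rank_\FFF(D)$ for any $2\times2\times2$ tensors $C,D$: this holds because $(C;D)=(C;O)+(O;D)$ and a rank-one decomposition of $C$ (resp. $D$), each term tensored with the first (resp. second) basis vector in the fourth slot, decomposes $(C;O)$ (resp. $(O;D)$) into rank-one $4$-tensors. Second, the fourth-factor substitution: applying $(E,E,E,\begin{pmatrix}1&0\\x&1\end{pmatrix})\in\GL(2,\FFF)^4$ sends $(T_1;B)$ to $(T_1;B+xT_1)$, and since this group element is invertible and rank is $\GL(2,\FFF)^4$-invariant, $\rank_\FFF(T_1;B)=\rank_\FFF(T_1;B+xT_1)$.

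Now I would choose $x$ as in the hypothesis, so that $\Delta(B+xT_1)$ is positive (resp. nonzero). By Theorem~\ref{thm:2x2x2}, condition~\eqref{thm:2x2x2:different} (equivalently Proposition~\ref{prop:Silva-etal}(2) over $\RRR$), this gives $\rank_\FFF(B+xT_1)\le 2$. Combining with sub-additivity and $\rank_\FFF(T_1)\le 1$ yields $\rank_\FFF(T_1;B)=\rank_\FFF(T_1;B+xT_1)\le 1+2=3$, and therefore $\rank_\FFF(A;B)\le\rank_\FFF(T_2;O)+\rank_\FFF(T_1;B)\le 1+3=4$.

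I do not expect a genuine obstacle here; the only point requiring care is the bookkeeping just described. Absorbing \emph{both} rank-one summands of $A$ into the $B$-slab would merely give the trivial bound $\rank_\FFF(A;B)\le\rank_\FFF(A)+\rank_\FFF(B)\le 2+3=5$, and the improvement to $4$ comes precisely from isolating one rank-one summand and spending the $\Delta$-hypothesis on the combination of the other one with $B$.
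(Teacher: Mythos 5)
Your proof is correct and follows essentially the same route as the paper: peel off a rank-one $2\times2\times2\times2$ summand and bound the remaining piece by $\rank(T_1)+\rank(B+xT_1)\leq 1+2=3$ using the $\Delta$-hypothesis. The only cosmetic difference is that the paper writes the split explicitly as $(A;B)=(T_1;-xT_1)+(T_2;B+xT_1)$, whereas you isolate $(T_2;O)$ and then use a $\GL(2,\FFF)$ move in the fourth factor to replace $B$ by $B+xT_1$; both are the same mechanism.
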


\begin{proof}
We replace the values of $\Delta$ are nonzero instead of positive over $\CCC$ in the following argument.
So, we assume that the base field is $\RRR$.
Suppose that $\Delta(B+xT_1)>0$.
Then $\rank(B+xT_1)=2$.  Therefore we have
\begin{equation*}
\begin{split}
\rank((A;B)+(-T_1;xT_1)) &=\rank(T_2;B+xT_1) \\
&\leq \rank(T_2)+\rank(B+xT_1) \\
&=1+2=3
\end{split}
\end{equation*}
and
\begin{equation*}
\begin{split}
\rank(A;B) &\leq \rank(T_1;-xT_1)+\rank(Y+(-T_1;xT_1)) \\
&\leq 1+3=4.
\end{split}
\end{equation*}
\end{proof}

\begin{lem} \label{lem;onepartiszero}
Let $\underline{Y}=((E;O);(B_1;B_2))$ be a $2\times 2\times 2\times 2$ tensor.
Then $\rank_{\FFF}(\underline{Y})\leq 4$ holds.
\end{lem}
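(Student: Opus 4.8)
The plan is to derive $\rank_\FFF(\underline Y)\le4$ from Lemma~\ref{lem:onewayseparate}, applied with $A=Y_{1\cdot}=(E;O)$ and $B=Y_{2\cdot}=(B_1;B_2)$, after first disposing of the case $B_2=O$. If $B_2=O$, then $Y_{2\cdot}=(B_1;O)$ is a $2\times2\times2$ tensor of rank at most $\rank(B_1)\le2$, and $Y_{1\cdot}=(E;O)$ has rank at most $2$; concatenating rank-one decompositions of $Y_{1\cdot}$ and of $Y_{2\cdot}$ exhibits $\underline Y$ as a sum of at most four rank-one $2\times2\times2\times2$ tensors, whence $\rank_\FFF(\underline Y)\le4$.

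So assume $B_2\ne O$. Pick a rank-one matrix $M$ with $\tr M=1$; then $\det(E-M)=1-\tr M+\det M=0$ and $E-M\ne O$ (as $M$ has rank $1$), so $E-M$ is rank one and $(E;O)=(M;O)+(E-M;O)$ is a sum of two rank-one $2\times2\times2$ tensors. Set $T_1=(M;O)$, so that $B+xT_1=(B_1+xM;B_2)$. Using $\Delta(C;D)=(C\cdot D)^2-4\det C\det D$ (immediate from the definition of $\Delta$), the bilinearity of $C\cdot D=\det(C+D)-\det C-\det D$, and $\det(B_1+xM)=\det B_1+x(B_1\cdot M)$ (valid since $\det M=0$), one obtains
$$\Delta(B+xT_1)=(M\cdot B_2)^2\,x^2+\bigl(2(B_1\cdot B_2)(M\cdot B_2)-4(B_1\cdot M)\det B_2\bigr)x+\Delta(B_1;B_2),$$
a polynomial in $x$ whose leading coefficient is $(M\cdot B_2)^2$. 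Consequently, if $M$ can in addition be chosen with $M\cdot B_2\ne0$, then over $\CCC$ this polynomial has degree $2$, hence is nonzero for some $x$, while over $\RRR$ its leading coefficient is strictly positive, hence it attains a positive value for some $x$; in either case Lemma~\ref{lem:onewayseparate} applies and gives $\rank_\FFF(\underline Y)=\rank_\FFF((E;O);(B_1;B_2))\le4$.

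It remains to produce, for $B_2\ne O$, a rank-one matrix $M$ with $\tr M=1$ and $M\cdot B_2\ne0$. Writing $C\cdot D=\det(\ccc_1,\ddd_2)+\det(\ddd_1,\ccc_2)$ in terms of columns, the form $C\cdot D$ is non-degenerate on $\FFF^{2\times2}$: if $C\cdot D=0$ for all $D$, then, specializing $D$ to have first column $\zerovec$, we get $\det(\ccc_1,\ddd_2)=0$ for every $\ddd_2$, so $\ccc_1=\zerovec$, and likewise $\ccc_2=\zerovec$. Moreover the rank-one trace-one matrices span $\FFF^{2\times2}$: already the four matrices $\bigl(\begin{smallmatrix}1&0\\0&0\end{smallmatrix}\bigr)$, $\bigl(\begin{smallmatrix}0&0\\0&1\end{smallmatrix}\bigr)$, $\bigl(\begin{smallmatrix}1&1\\0&0\end{smallmatrix}\bigr)$, $\bigl(\begin{smallmatrix}1&0\\1&0\end{smallmatrix}\bigr)$ do. Hence if $M\cdot B_2=0$ for every rank-one trace-one $M$, then $D\cdot B_2=0$ for all $D$, forcing $B_2=O$, a contradiction; so a suitable $M$ exists and the argument is complete.

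The step I expect to require the most care is securing the hypothesis of Lemma~\ref{lem:onewayseparate} in the sharp form needed over $\RRR$, namely that $\Delta(B+xT_1)$ be \emph{positive}, not merely nonzero, for some $x$. Here this is automatic, because the leading coefficient $(M\cdot B_2)^2$ of the relevant quadratic in $x$ is a perfect square, hence nonnegative and strictly positive as soon as $M\cdot B_2\ne0$; the rest is routine $2\times2$ matrix algebra.
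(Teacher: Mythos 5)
Your proof is correct and follows essentially the same route as the paper: split $(E;O)$ into two rank-one slices $(M;O)+(E-M;O)$, observe that $\Delta((B_1;B_2)+x(M;O))$ is a quadratic in $x$ with leading coefficient $(M\cdot B_2)^2$, invoke Lemma~\ref{lem:onewayseparate}, and note that the only obstruction forces $B_2=O$, which is handled directly. The sole difference is cosmetic: where the paper tests the four explicit matrices $S_1,\dots,S_4$ and checks that their leading coefficients cannot all vanish unless $B_2=O$, you replace this by the (correct) abstract argument that rank-one trace-one matrices span $\FFF^{2\times2}$ and the form $C\cdot D$ is non-degenerate.
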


\begin{proof}
Put
$$S_1=\diag(1,0), S_2=\diag(0,1), S_3=\frac{1}{2}\begin{pmatrix} 1&1\\ 1&1\end{pmatrix} 
\text{ and }
S_4=\frac{1}{2}\begin{pmatrix} 1&-1\\ -1&1\end{pmatrix}.$$
Note that $(E;O)=(S_1;O)+(S_2;O)=(S_3;O)+(S_4;O)$ and
$$\begin{array}{l}
\Delta((B_1;B_2)+x(S_1;O))=b_{222}^2x^2+(\text{lower term}), \\
\Delta((B_1;B_2)+x(S_2;O))=b_{112}^2x^2+(\text{lower term}), \\
\Delta((B_1;B_2)+x(S_3;O))=(b_{112} - b_{122} - b_{212} + b_{222})^2x^2/4+(\text{lower term}), \\
\Delta((B_1;B_2)+x(S_4;O))=(b_{112} + b_{122} - b_{212} - b_{222})^2x^2/4+(\text{lower term}).
\end{array}$$
If $b_{222}\ne 0$ then $\rank(B_1;B_2)+x_0(S_1;O))=2$ for some $x_0$
and $\rank(\underline{Y})\leq 4$ by Lemma~\ref{lem:onewayseparate}.
Similarly, 
if $b_{112}$, $b_{112} - b_{122} - b_{212} + b_{222}$ or $b_{112} + b_{122} - b_{212} - b_{222}$
is nonzero then $\rank(\underline{Y})\leq 4$.
If
$b_{222}=b_{112}=b_{112} - b_{122} - b_{212} + b_{222}=b_{112} + b_{122} - b_{212} - b_{222}=0$ 
then $B_2=0$ and thus $\rank(\underline{Y})\leq 3$, since $\rank(\underline{Y})=\rank(E;B_1)\leq 3$.
\end{proof}

\begin{prop}[cf. {\cite[Proposition~4.4]{Kong-etal:2012}}] \label{prop:leq2implies4a}
Let $\underline{Y}=((A_1;A_2);(B_1;B_2))$ be a $2\times 2\times 2\times 2$ tensor.
Suppose that $\rank_{\FFF}(A_1;A_2)\leq 2$.
Then $\rank_{\FFF}(\underline{Y})\leq 4$ holds.
\end{prop}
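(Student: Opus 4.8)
Write $A=(A_1;A_2)$, so that $\underline{Y}=(A;B)$ with $B=(B_1;B_2)$. The plan is to normalize $A$ under the $\GL(2,\FFF)^3$-action on coordinates $1,2,3$ — which acts on $A$ as a $2\times2\times2$ tensor, acts simultaneously on $B$, and leaves $\rank_\FFF(\underline{Y})$ unchanged — and then to dispose of each resulting normal form via Lemma~\ref{lem:onewayseparate} or Lemma~\ref{lem;onepartiszero}. I will also use freely that the coordinate-$4$ action lets us replace $B$ by $B-cA$ without changing $\rank_\FFF(\underline{Y})$ or a fixed decomposition $A=T_1+T_2$ into rank-one tensors.

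The engine is the following identity, of the same shape as the leading-term calculation in the proof of Lemma~\ref{lem;onepartiszero}: if $T=u\otimes v\otimes w$ is rank one, with $w=(w_1,w_2)^\top$, then $T=(w_1uv^\top;\,w_2uv^\top)$, and for any $2\times2\times2$ tensor $B=(B_1;B_2)$ the polynomial $\Delta(B+xT)$ has degree at most $2$ in $x$, with leading coefficient $\bigl(v^\top\mathrm{adj}(w_1B_2-w_2B_1)u\bigr)^2$; this I would verify from $\det(M+tuv^\top)=\det M+t\,v^\top\mathrm{adj}(M)u$ applied to the three $2\times2$ determinants entering $\Delta$, using linearity of $\mathrm{adj}$ on $2\times2$ matrices. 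Hence, whenever $A$ has a rank-one summand $T_1=u\otimes v\otimes w$ with $v^\top\mathrm{adj}(w_1B_2-w_2B_1)u\ne0$, Lemma~\ref{lem:onewayseparate} gives $\rank_\FFF(\underline{Y})\le4$: over $\RRR$ the leading coefficient is a positive square, so $\Delta(B+xT_1)>0$ for large $x$; over $\CCC$ it is nonzero, so $\Delta(B+xT_1)\ne0$ for some $x$.

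Now the cases. If $\rank_\FFF(A)\le1$ then $\rank_\FFF(\underline{Y})\le\rank_\FFF(A;O)+\rank_\FFF(O;B)\le1+3=4$, so assume $\rank_\FFF(A)=2$. If some $xA_1+yA_2$ is nonsingular, mixing coordinate $3$ and normalizing in coordinates $1,2$ brings $A$ to $(E;\diag(\mu_1,\mu_2))$, where $A_2$ is diagonalizable because $\rank_\FFF(E;A_2)=2$. If $\mu_1=\mu_2=\mu$, mixing coordinate $3$ by $\bigl(\begin{smallmatrix}1&0\\-\mu&1\end{smallmatrix}\bigr)$ turns $A$ into $(E;O)$ and Lemma~\ref{lem;onepartiszero} finishes. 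If $\mu_1\ne\mu_2$, take $T_1=(E_{11};\mu_1E_{11})$ (with complement $T_2=(E_{22};\mu_2E_{22})$): the invariant above equals $(B_2)_{22}-\mu_1(B_1)_{22}$, and replacing $B$ by $B-cA$ adds $c(\mu_1-\mu_2)$ to it, so a suitable $c$ makes it nonzero and Lemma~\ref{lem:onewayseparate} applies. If, instead, every $xA_1+yA_2$ is singular, then $A_1,A_2$ are nonzero rank-one matrices (neither is zero, as $\rank_\FFF A=2$), so their column spaces — or, after transposing coordinates $1$ and $2$, their row spaces — coincide, and normalizing gives $A=(E_{11};E_{12})=e_1\otimes E$. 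For every pair with $a^\top b=1$, the tensor $T_1=e_1\otimes a\otimes b$ is a rank-one summand of $A$ (its complement $e_1\otimes(E-ab^\top)$ has rank one, since $\det(E-ab^\top)=1-a^\top b=0$), and the invariant above becomes the bilinear form $a^\top N b$ with $N=\bigl(\begin{smallmatrix}(B_2)_{22}&-(B_1)_{22}\\-(B_2)_{21}&(B_1)_{21}\end{smallmatrix}\bigr)$. A linear functional vanishing on the affine line $\{b:a^\top b=1\}$ is identically zero, so $a^\top Nb=0$ for all admissible pairs forces $N=O$; then $B_1$ and $B_2$ have zero second row, $\underline{Y}=e_1\otimes(\text{a }2\times2\times2\text{ tensor})$, and $\rank_\FFF(\underline{Y})\le3$. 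Otherwise some admissible $T_1$ meets the criterion and we are done.

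The delicate point is this last case: one must exploit the non-uniqueness of the rank-one decomposition of $e_1\otimes E$ — sweeping $T_1$ over the whole family $\{e_1\otimes a\otimes b:a^\top b=1\}$ — to force the invariant $v^\top\mathrm{adj}(w_1B_2-w_2B_1)u$ off zero, and to recognize that the single remaining obstruction collapses $\underline{Y}$ onto a $2\times2\times2$ slice. Establishing the leading-coefficient identity and carrying out the normal-form reductions uniformly over $\RRR$ and $\CCC$ are the remaining, routine points.
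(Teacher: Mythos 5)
Your proposal is correct, but it takes a genuinely different route from the paper's. The paper first disposes of $\rank_\FFF(B_1;B_2)\le 2$ by subadditivity, then normalizes $(A_1;A_2)$ to $(E;\diag(a,b))$ and perturbs $B$ by a multiple of the \emph{whole} tensor $A$, computing $\Delta(B+xA)=(a-b)^2x^4+\cdots$, so that $a\ne b$ gives $\rank(B+x_0A)\le 2$ and $a=b$ reduces to Lemma~\ref{lem;onepartiszero}; you instead perturb by a rank-one summand $T_1$ of $A$ and feed it to Lemma~\ref{lem:onewayseparate}, via the identity that $\Delta(B+xT_1)$ has degree at most $2$ in $x$ with leading coefficient the square $\bigl(v^\top\mathrm{adj}(w_1B_2-w_2B_1)u\bigr)^2$ — this is correct, follows from $\det(M+tuv^\top)=\det M+t\,v^\top\mathrm{adj}(M)u$ exactly as you indicate, and is the same mechanism the paper uses inside its proof of Lemma~\ref{lem;onepartiszero}, here made uniform. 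Your extra moves — the mode-$4$ shift $B\mapsto B-cA$, which changes the invariant by $c(\mu_1-\mu_2)$, and sweeping $T_1=e_1\otimes a\otimes b$ over the family $a^\top b=1$ — all check out (I verified the invariant $(B_2)_{22}-\mu_1(B_1)_{22}$, the bilinear form $a^\top Nb$, and the spanning argument forcing $N=O$), and they buy something real: your case split on $A$ covers the sub-case in which every matrix $xA_1+yA_2$ is singular while $\rank_\FFF(A_1;A_2)=2$, e.g.\ $A_1=e_1e_1^\top$, $A_2=e_1e_2^\top$. The paper's proof silently assumes this cannot happen: its claim that some $\mu\in\GL(2,\FFF)^3$ takes any rank-two $(A_1;A_2)$ to $(E;\diag(a,b))$ fails for such pencils, because the $\GL(2,\FFF)^3$-action preserves the property that all matrices in the span of the two slices are singular. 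So the paper's argument is shorter and more direct when the pencil contains a nonsingular matrix, while your argument is valid uniformly over $\RRR$ and $\CCC$ and actually closes this gap in the published case analysis, at the cost of the longer leading-coefficient computation and the final observation that identical vanishing of the invariant forces $B_1,B_2$ to have zero second row, collapsing $\underline{Y}$ onto a $2\times2\times2$ slice of rank at most $3$.
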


\begin{proof}
If $\rank(B_1;B_2)\leq 2$ then
we have
$$\rank(\underline{Y})\leq \rank(A_1;A_2)+\rank(B_1;B_2)\leq 2+2=4$$
and similarly if $\rank(A_1;A_2)\leq 1$ then
$$\rank(\underline{Y})\leq \rank(A_1;A_2)+\rank(B_1;B_2)\leq 1+3=4.$$
Suppose that $\rank(A_1;A_2)=2$ and $\rank(B_1;B_2)=3$.
Then there is $\mu\in \GL(2,\FFF)^{\times 3}$ such that
$\mu\cdot(A_1;A_2)=(E;\diag(a,b))$ for some $a,b\in\mathbb{R}$.
Putting $(C_1;C_2)=\mu\cdot(B_1;B_2)$,
we have
$$\Delta((C_1;C_2)+x(E;\diag(a,b)))=(a-b)^2x^4+(\text{lower term}).$$
Then if $a\ne b$ then $\Delta((B_1;B_2)+x_0(A_1;A_2))$ is positive (resp. nonzero) and thus
$\rank((B_1;B_2)+x_0(A_1;A_2))\leq 2$ for some $x_0$.
We have
\begin{equation*}
\begin{split}
\rank(\underline{Y})&\leq \rank(A_1;A_2)+\rank((B_1;B_2)+x_0(A_1;A_2)) \\
&\leq 2+2=4.
\end{split}
\end{equation*}
Finally, suppose that $a=b$.
Since $\underline{Y}$ is equivalent to $((E;O);(C_1;C_2-aC_1))$,
by Lemma~\ref{lem;onepartiszero}, we have $\rank(\underline{Y})\leq 4$.
\end{proof}

Over $\RRR$, the following proposition has been obtained 
(see \cite[Proposition 4.1]{Kong-etal:2012}).
For the reader's convenience, we show the proof of Theorem~\ref{thm:Kong-etal}.
\medskip

\begin{proof}
Let $\underline{Y}=(A;B)$ be a $2\times2\times2\times2$ real tensor.
If $\rank(A)\leq 2$, then $\rank(\underline{Y})\leq 4$ by Proposition~\ref{prop:leq2implies4a}.
Suppose that $\rank(A)>2$.  Then $\rank(A)=3$, since
the maximal rank of $2\times2\times2$ tensors is $3$. Take a $2\times2\times2$ rank one tensor $C$
such that $\rank(A-C)=2$.
Again by Proposition~\ref{prop:leq2implies4a}, we have $\rank(\underline{Y}-(C;O))\leq 4$.
Therefore, $\rank(\underline{Y})\leq \rank(\underline{Y}-(C;O))+\rank(C;O)\leq 4+1=5$.
This completes the proof.
\end{proof}

\begin{prop} \label{prop:exceptnone}
Let $\underline{T}$ be a $2\times 2\times2\times 2$ complex tensor.
There is a tensor $\underline{A}$ such that $\underline{A}$ is equivalent to $\underline{T}$ and 
$(A_{11};A_{12})$ has rank less than or equal to $2$.
\end{prop}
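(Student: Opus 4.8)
The plan is to use only the fourth $\GL(2,\CCC)$-factor of the group action, which reduces the statement to a fact about the pencil of $2\times2\times2$ tensors spanned by the two ``rows'' of $\underline{T}$, and then to invoke the characterization of rank-three $2\times2\times2$ complex tensors from Corollary~\ref{cor:2x2x2}. Write $\underline{T}=((T_{11};T_{12});(T_{21};T_{22}))$ and set $U:=T_{1\cdot}=(T_{11};T_{12})$ and $V:=T_{2\cdot}=(T_{21};T_{22})$, regarded as $2\times2\times2$ complex tensors. For any $(\alpha,\beta)\ne(0,0)$ choose $Q\in\GL(2,\CCC)$ whose first row is $(\alpha,\beta)$, and set $\underline{A}:=(E,E,E,Q)\cdot\underline{T}$. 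By the description of the fourth-factor action on $\underline{T}$ written as the pair of rows $T_{1\cdot},T_{2\cdot}$, the top row of $\underline{A}$ is $\alpha T_{1\cdot}+\beta T_{2\cdot}$, that is, $(A_{11};A_{12})=\alpha U+\beta V$. Hence it suffices to produce one pair $(\alpha,\beta)\ne(0,0)$ with $\rank_{\CCC}(\alpha U+\beta V)\le2$; equivalently, to show that the pencil $\{\alpha U+\beta V\}$ is not entirely contained in the rank-three locus of $(\CCC^2)^{\otimes3}$.

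For this I would restrict the function $\Theta$ to the pencil. The quantity $\Theta((\aaa_1,\aaa_2);(\bbb_1,\bbb_2))=|\aaa_1,\bbb_1|+|\aaa_2,\bbb_2|$ is a homogeneous polynomial of degree two in the eight entries of a $2\times2\times2$ tensor, and each entry of $\alpha U+\beta V$ is linear in $(\alpha,\beta)$; therefore $p(\alpha,\beta):=\Theta(\alpha U+\beta V)$ is a homogeneous polynomial of degree two in the two variables $\alpha,\beta$. Over the algebraically closed field $\CCC$ such a binary form either vanishes identically or factors as a product of two linear forms, so in every case there exists $(\alpha_0,\beta_0)\ne(0,0)$ with $\Theta(\alpha_0 U+\beta_0 V)=0$.

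Finally I would feed this into Corollary~\ref{cor:2x2x2}\,\eqref{cor:2x2x2:complex}: a complex $2\times2\times2$ tensor of rank three must have $\Theta\ne0$, so $\alpha_0 U+\beta_0 V$ cannot have rank three, and since the maximal rank of a $2\times2\times2$ tensor is $3$ we conclude $\rank_{\CCC}(\alpha_0 U+\beta_0 V)\le2$. With $Q$ having first row $(\alpha_0,\beta_0)$, the tensor $\underline{A}=(E,E,E,Q)\cdot\underline{T}$ is equivalent to $\underline{T}$ and satisfies $\rank_{\CCC}(A_{11};A_{12})\le2$, as required. There is no real obstacle beyond bookkeeping: the one point to check is that $\Theta$ really contributes degree two after the substitution (it may be the zero polynomial, which only makes the argument easier), so that ``a binary form over $\CCC$ has a nontrivial zero'' applies. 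One could instead run the same idea with $\Delta$ (a binary quartic on the pencil) via Theorem~\ref{thm:2x2x2}\,\eqref{thm:2x2x2:different} whenever that quartic is not identically zero, but the $\Theta$ version handles all tensors uniformly, and — tellingly — it breaks over $\RRR$, where a binary quadratic such as $\alpha^2+\beta^2$ may have no real root.
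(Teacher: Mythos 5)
Your proposal is correct and follows essentially the same route as the paper: use the fourth $\GL(2,\CCC)$-factor to replace the top slice pair by a member of the pencil spanned by $T_{1\cdot}$ and $T_{2\cdot}$ on which $\Theta$ vanishes (such a zero exists over $\CCC$ because $\Theta$ is quadratic on the pencil), and then conclude $\rank\le 2$ from Corollary~\ref{cor:2x2x2}\,\eqref{cor:2x2x2:complex}. The only real difference is cosmetic: the paper first normalizes $(T_{11};T_{12})$ to $(E;S)$ so that the inhomogeneous quadratic $\Theta(xS_{1\cdot}+S_{2\cdot})$ has degree exactly two and hence a root, whereas you work projectively with the homogeneous binary form $\Theta(\alpha U+\beta V)$, which absorbs the degenerate (identically zero or constant) cases without any normalization.
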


\begin{proof}
We may suppose that $\rank(T_{11};T_{12})=3$. 
There is $\alpha\in \GL(2,\FFF)^3$ such that $S_{11}=E$, $S_{12}=\begin{pmatrix} 0&1\\ 0&0\end{pmatrix}$
for $\underline{S}=(\alpha,E)\cdot \underline{T}$.
Since $\Theta(xS_{1\cdot}+S_{2\cdot})$ is a polynomial of $x$ with degree two,
there is $x_0\in\CCC$ such that $\Theta(x_0S_{1\cdot}+S_{2\cdot})=0$.
Thus by Corollary~\ref{cor:2x2x2} \eqref{cor:2x2x2:complex}, we have $\rank(x_0S_{1\cdot}+S_{2\cdot})\leq 2$.
Let $P=\begin{pmatrix} x_0&1\\ 1&0 \end{pmatrix}$ and $\underline{A}=(\alpha,P)\cdot\underline{T}$.
Then $\rank(A_{11};A_{12})\leq2$.
\end{proof}

\begin{thm}
The maximal and typical rank of $\CCC^{2\times2\times2\times2}$
is equal to $4$.
\end{thm}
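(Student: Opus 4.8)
The plan is to establish the statement in two halves: that every complex $2\times2\times2\times2$ tensor has rank at most $4$ (which at the same time gives a proof of Theorem~\ref{thm:Brylinski} independent of \cite{Brylinski:2002}), and that a Zariski dense set of such tensors has rank exactly $4$. Because the ambient space $\CCC^{2\times2\times2\times2}\cong\CCC^{16}$ is irreducible, putting the two halves together will show that $4$ is the generic --- hence the unique typical --- rank, and simultaneously the maximal rank.

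For the upper bound I would start from an arbitrary complex $2\times2\times2\times2$ tensor $\underline{T}$ and apply Proposition~\ref{prop:exceptnone} to replace it by an equivalent tensor $\underline{A}=((A_{11};A_{12});(A_{21};A_{22}))$ with $\rank_{\CCC}(A_{11};A_{12})\le 2$. Reading $(A_{11};A_{12})$ as the first $2\times2\times2$ slice, Proposition~\ref{prop:leq2implies4a} then applies directly and yields $\rank_{\CCC}(\underline{A})\le 4$. Since rank is invariant under the $\GL(2,\CCC)^4$-action, $\rank_{\CCC}(\underline{T})=\rank_{\CCC}(\underline{A})\le 4$, so the maximal rank is at most $4$.

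For the lower bound I would run a dimension count. The rank $\le 1$ tensors form the affine cone over the Segre variety, namely the closure of the image of the map $(\CCC^2)^4\to\CCC^{16}$ sending $(\uuu_1,\uuu_2,\uuu_3,\uuu_4)$ to $\uuu_1\otimes\uuu_2\otimes\uuu_3\otimes\uuu_4$; its generic fibre is the $3$-dimensional group of rescalings $(\uuu_i)\mapsto(t_i\uuu_i)$ with $t_1t_2t_3t_4=1$, so this cone has dimension $4\cdot 2-3=5$. Hence the set $\mathcal{T}_{\le 3}$ of complex tensors of rank at most $3$, being the image under addition of three copies of that cone, has dimension at most $3\cdot 5=15<16$. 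Therefore $\mathcal{T}_{\le 3}$ is contained in a proper Zariski closed subset; its complement $U$ is nonempty and Zariski dense, every tensor in $U$ has rank $\ge 4$, and by the upper bound such a tensor has rank exactly $4$. By irreducibility of the ambient space, $4$ is the unique generic rank, which coincides with the maximal rank.

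The only real content is the upper bound, which is supplied by Propositions~\ref{prop:exceptnone} and~\ref{prop:leq2implies4a}; the single point requiring attention is to check that the slice singled out by Proposition~\ref{prop:exceptnone} occupies exactly the position required as the hypothesis of Proposition~\ref{prop:leq2implies4a}, which it does after the evident relabelling of the four tensor factors. The lower bound is the standard secant dimension estimate and poses no obstacle; one should only emphasize that, in contrast with the real case treated later in the paper, over $\CCC$ the rank is constant on a dense open set, so there is a single typical rank and the notion of typical rank from Theorem~\ref{typical rank by Friedland} collapses to the generic rank.
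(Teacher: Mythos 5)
Your proof is correct, and its first half is exactly the paper's argument: the upper bound $\rank_{\CCC}(\underline{T})\le 4$ is obtained by combining Proposition~\ref{prop:exceptnone} (pass to an equivalent tensor whose slice $(A_{11};A_{12})$ has rank at most $2$) with Proposition~\ref{prop:leq2implies4a}; no relabelling of factors is actually needed, since that slice already sits in the position required by the hypothesis. Where you diverge is the lower bound. The paper gets ``typical rank $\ge 4$'' from the flattening inequality
$\rank\underline{A}\ge\rank\begin{pmatrix} A_{11}&A_{12}\\ A_{21}&A_{22}\end{pmatrix}$,
i.e.\ from the fact that a generic tensor has a nonsingular $4\times4$ unfolding; this is elementary linear algebra and even exhibits explicit rank-$4$ tensors (any tensor with nonsingular flattening, such as $\underline{X}$ later in the paper). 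You instead run the standard secant dimension count: the cone over the Segre variety has dimension $4\cdot2-3=5$, so the rank-$\le 3$ locus has dimension at most $15<16$, whence a Zariski dense open set of tensors has rank exactly $4$. Your count is correct and buys a little more conceptual clarity about why the generic (hence unique typical) rank over $\CCC$ is $4$, at the cost of invoking constructibility and dimension theory rather than a single determinant; both routes legitimately complete the theorem.
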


\begin{proof}
By Propositions~\ref{prop:leq2implies4a} and \ref{prop:exceptnone},
the maximal rank of $\CCC^{2\times2\times2\times2}$
is equal to $4$.
For $\underline{A}=((A_{11};A_{12});(A_{21};A_{22}))\in \CCC^{2\times2\times2\times2}$, 
since 
$$\rank\underline{A}\geq 
\rank\begin{pmatrix} A_{11}&A_{12} \\ A_{21}&A_{22}\end{pmatrix},$$
a typical rank of $\CCC^{2\times2\times2\times 2}$ is greater than
or equal to $4$.
Therefore a typical rank of $\CCC^{2\times2\times2\times 2}$ is equal to $4$.
\end{proof}

Let
\begin{equation}\label{eq:exampletensor}
\underline{X}=\begin{array}{cc|cc}
1&0&0&1\\
0&1&-1&0\vrule width0pt depth4pt\\
\hline
0&-1&1&0\vrule width0pt height12pt\\
2&0&0&2\vrule width0pt depth4pt\\
\end{array}.
\end{equation}

We do not proceed in the real number field as in the complex number field:

\begin{prop} \label{prop:notConverttorank2slice}
There is a tensor $\underline{T}$ in $\RRR^{2\times2\times2\times2}$ such that
$\rank(S_{11};S_{12})=\rank(S_{11};S_{21})=\rank(S_{21};S_{22})=\rank(S_{12};S_{22})=3$
for any $g\in \GL(2,\RRR)^4$, where
$\underline{S}=\begin{array}{c|c} S_{11}&S_{12}\vrule width0pt depth4pt\\
\hline
S_{21}&S_{22}\vrule width0pt height12pt
\end{array}=g\cdot \underline{T}$.
\end{prop}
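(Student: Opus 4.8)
The plan is to verify that the explicit tensor $\underline{X}$ of \eqref{eq:exampletensor} has the stated property. Reading off its blocks, $X_{11}=E$, $X_{12}=\begin{pmatrix}0&1\\-1&0\end{pmatrix}$, $X_{21}=\begin{pmatrix}0&-1\\2&0\end{pmatrix}$, $X_{22}=\begin{pmatrix}1&0\\0&2\end{pmatrix}$; the four faces to control are the $2\times2\times2$ tensors $X_{1\cdot}=(X_{11};X_{12})$, $X_{2\cdot}=(X_{21};X_{22})$, $X_{\cdot1}=(X_{11};X_{21})$ and $X_{\cdot2}=(X_{12};X_{22})$. First I would reduce the $\GL(2,\RRR)^4$ statement to a $\GL(2,\RRR)^3$ statement: given $g=(g_1,g_2,g_3,g_4)\in\GL(2,\RRR)^4$ and $\underline{S}=g\cdot\underline{X}$, the explicit description of the action in Section~4 shows that $(S_{11};S_{12})$ and $(S_{21};S_{22})$ are the images, under the $\GL(2,\RRR)^3$-action of the three factors fixing the third tensor index, of $\alpha X_{1\cdot}+\beta X_{2\cdot}$ with $(\alpha,\beta)$ running over the two rows of the factor acting on that index; similarly $(S_{11};S_{21})$ and $(S_{12};S_{22})$ are $\GL(2,\RRR)^3$-images of $\alpha X_{\cdot1}+\beta X_{\cdot2}$ with $(\alpha,\beta)$ a row of the factor acting on the fourth index. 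As rows of invertible matrices are nonzero and rank is $\GL(2,\RRR)^3$-invariant, it suffices to show $\rank(\alpha X_{1\cdot}+\beta X_{2\cdot})=3$ and $\rank(\alpha X_{\cdot1}+\beta X_{\cdot2})=3$ for all $(\alpha,\beta)\ne(0,0)$.

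Next I would compute Cayley's hyperdeterminant of these two one-parameter families. Writing $\alpha X_{1\cdot}+\beta X_{2\cdot}=(A;B)$ with $A=\alpha X_{11}+\beta X_{21}$, $B=\alpha X_{12}+\beta X_{22}$, a short determinant computation gives $\det A=\det B=\alpha^2+2\beta^2$ and $\det(A+B)=\det(A-B)$, so from the definition $\Delta(\alpha X_{1\cdot}+\beta X_{2\cdot})=-4(\alpha^2+2\beta^2)^2$; the analogous computation yields $\Delta(\alpha X_{\cdot1}+\beta X_{\cdot2})=-8(\alpha^2+\beta^2)^2$. Both are strictly negative whenever $(\alpha,\beta)\ne(0,0)$, so Proposition~\ref{prop:Silva-etal}(3) forces rank $3$ in each case. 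Combined with the reduction above, this proves $\rank(S_{11};S_{12})=\rank(S_{11};S_{21})=\rank(S_{21};S_{22})=\rank(S_{12};S_{22})=3$ for every $g\in\GL(2,\RRR)^4$.

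The part I expect to be the main obstacle is the reduction step, namely checking carefully against the explicit action formulas of Section~4 that each of the four faces of $g\cdot\underline{X}$ really is $\GL(2,\RRR)^3$-equivalent to a genuine linear combination of two fixed faces of $\underline{X}$, with coefficient vector sweeping out all of $\RRR^2\smallsetminus\{(0,0)\}$ as $g$ varies; the two hyperdeterminant evaluations are then routine. The point of the particular choice of $\underline{X}$, assembled from $E$, the rotation $\begin{pmatrix}0&1\\-1&0\end{pmatrix}$ and rescaled copies, is exactly that both $\Delta$'s turn out to be negative multiples of perfect squares and hence vanish only at $(\alpha,\beta)=(0,0)$; if one prefers not to display the factorizations, it suffices to observe that these $\Delta$'s are nonzero homogeneous quartics in $(\alpha,\beta)$ that are everywhere $\le 0$ and zero only at the origin.
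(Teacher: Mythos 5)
Your proof is correct and follows essentially the same route as the paper: it uses the same tensor $\underline{X}$, reduces via the group action to nontrivial linear combinations $\alpha X_{1\cdot}+\beta X_{2\cdot}$ and $\alpha X_{\cdot1}+\beta X_{\cdot2}$, and computes the same hyperdeterminants $-4(\alpha^2+2\beta^2)^2$ and $-8(\alpha^2+\beta^2)^2$, concluding rank $3$ from Proposition~\ref{prop:Silva-etal}(3). The only cosmetic difference is that you invoke $\GL(2,\RRR)^3$-invariance of rank where the paper invokes invariance of the sign of $\Delta$ (Proposition~\ref{prop:Silva}), which amounts to the same reduction.
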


\begin{proof}
We show $\underline{T}=\underline{X}$ satisfies the assertion.
It suffices to show that $\Delta(S_{11};S_{12})$, $\Delta(S_{11};S_{21})$, $\Delta(S_{21};S_{22})$, $\Delta(S_{12};S_{22})$ are all negative for any $g\in \GL(2,\RRR)^4$.
Let $g\in \GL(2,\RRR)^4$.  By Proposition~\ref{prop:Silva}, we may suppose that $g=(E,E,P,Q)$.
For $\Delta(S_{11};S_{12})$, we may further suppose that $P=E$ by Proposition~\ref{prop:Silva}.
We straightforwardly see that  $\Delta(S_{11};S_{12})=-4(q_{11}^2 + 2q_{12}^2)^2<0$.
Similarly, for $\Delta(S_{11};S_{21})$, we may suppose that $Q=E$ and
see that $\Delta(S_{11};S_{21})=-8 (p_{11}^2 + p_{12}^2)^2<0$.
\end{proof}

Now we give a condition for a tensor $\underline{T}\in \RRR^{2\times2\times2\times2}$ to have rank $4$.

Let $\underline{T}=(t_{ijkl})$ be a $2\times 2\times2\times 2$ real tensor.
We consider the following condition (E):
\begin{eqnarray}
\left|\begin{matrix} 
t_{1111}&t_{1211}&t_{2111}&t_{2211} \\
c_{12}d_{12} & c_{12}d_{22} & c_{22}d_{12} & c_{22}d_{22} \\
c_{13}d_{13} & c_{13}d_{23} & c_{23}d_{13} & c_{23}d_{23} \\
c_{14}d_{14} & c_{14}d_{24} & c_{24}d_{14} & c_{24}d_{24} \end{matrix}\right|
\left|\begin{matrix} 
t_{1122}&t_{1222}&t_{2122}&t_{2222} \\
c_{12}d_{12} & c_{12}d_{22} & c_{22}d_{12} & c_{22}d_{22} \\
c_{13}d_{13} & c_{13}d_{23} & c_{23}d_{13} & c_{23}d_{23} \\
c_{14}d_{14} & c_{14}d_{24} & c_{24}d_{14} & c_{24}d_{24} \end{matrix}\right|
\hbox to 30mm{}\notag \\
-\left|\begin{matrix} 
t_{1112}&t_{1212}&t_{2112}&t_{2212} \\
c_{12}d_{12} & c_{12}d_{22} & c_{22}d_{12} & c_{22}d_{22} \\
c_{13}d_{13} & c_{13}d_{23} & c_{23}d_{13} & c_{23}d_{23} \\
c_{14}d_{14} & c_{14}d_{24} & c_{24}d_{14} & c_{24}d_{24} \end{matrix}\right|
\left|\begin{matrix} 
t_{1121}&t_{1221}&t_{2121}&t_{2221} \\
c_{12}d_{12} & c_{12}d_{22} & c_{22}d_{12} & c_{22}d_{22} \\
c_{13}d_{13} & c_{13}d_{23} & c_{23}d_{13} & c_{23}d_{23} \\
c_{14}d_{14} & c_{14}d_{24} & c_{24}d_{14} & c_{24}d_{24} \end{matrix}\right|=0,
\\
\left|\begin{matrix} 
c_{11}d_{11} & c_{11}d_{21} & c_{21}d_{11} & c_{21}d_{21} \\
t_{1111}&t_{1211}&t_{2111}&t_{2211} \\
c_{13}d_{13} & c_{13}d_{23} & c_{23}d_{13} & c_{23}d_{23} \\
c_{14}d_{14} & c_{14}d_{24} & c_{24}d_{14} & c_{24}d_{24} \end{matrix}\right|
\left|\begin{matrix} 
c_{11}d_{11} & c_{11}d_{21} & c_{21}d_{11} & c_{21}d_{21} \\
t_{1122}&t_{1222}&t_{2122}&t_{2222} \\
c_{13}d_{13} & c_{13}d_{23} & c_{23}d_{13} & c_{23}d_{23} \\
c_{14}d_{14} & c_{14}d_{24} & c_{24}d_{14} & c_{24}d_{24} \end{matrix}\right|
\hbox to 30mm{}\notag \\
-\left|\begin{matrix} 
c_{11}d_{11} & c_{11}d_{21} & c_{21}d_{11} & c_{21}d_{21} \\
t_{1112}&t_{1212}&t_{2112}&t_{2212} \\
c_{13}d_{13} & c_{13}d_{23} & c_{23}d_{13} & c_{23}d_{23} \\
c_{14}d_{14} & c_{14}d_{24} & c_{24}d_{14} & c_{24}d_{24} \end{matrix}\right|
\left|\begin{matrix} 
c_{11}d_{11} & c_{11}d_{21} & c_{21}d_{11} & c_{21}d_{21} \\
t_{1121}&t_{1221}&t_{2121}&t_{2221} \\
c_{13}d_{13} & c_{13}d_{23} & c_{23}d_{13} & c_{23}d_{23} \\
c_{14}d_{14} & c_{14}d_{24} & c_{24}d_{14} & c_{24}d_{24} \end{matrix}\right|=0,
\\
\left|\begin{matrix} 
c_{11}d_{11} & c_{11}d_{21} & c_{21}d_{11} & c_{21}d_{21} \\
c_{12}d_{12} & c_{12}d_{22} & c_{22}d_{12} & c_{22}d_{22} \\
t_{1111}&t_{1211}&t_{2111}&t_{2211} \\
c_{14}d_{14} & c_{14}d_{24} & c_{24}d_{14} & c_{24}d_{24} \end{matrix}\right|
\left|\begin{matrix} 
c_{11}d_{11} & c_{11}d_{21} & c_{21}d_{11} & c_{21}d_{21} \\
c_{12}d_{12} & c_{12}d_{22} & c_{22}d_{12} & c_{22}d_{22} \\
t_{1122}&t_{1222}&t_{2122}&t_{2222} \\
c_{14}d_{14} & c_{14}d_{24} & c_{24}d_{14} & c_{24}d_{24} \end{matrix}\right|
\hbox to 30mm{}\notag \\
-\left|\begin{matrix} 
c_{11}d_{11} & c_{11}d_{21} & c_{21}d_{11} & c_{21}d_{21} \\
c_{12}d_{12} & c_{12}d_{22} & c_{22}d_{12} & c_{22}d_{22} \\
t_{1112}&t_{1212}&t_{2112}&t_{2212} \\
c_{14}d_{14} & c_{14}d_{24} & c_{24}d_{14} & c_{24}d_{24} \end{matrix}\right|
\left|\begin{matrix} 
c_{11}d_{11} & c_{11}d_{21} & c_{21}d_{11} & c_{21}d_{21} \\
c_{12}d_{12} & c_{12}d_{22} & c_{22}d_{12} & c_{22}d_{22} \\
t_{1121}&t_{1221}&t_{2121}&t_{2221} \\
c_{14}d_{14} & c_{14}d_{24} & c_{24}d_{14} & c_{24}d_{24} \end{matrix}\right|=0,
\\
\left|\begin{matrix} 
c_{11}d_{11} & c_{11}d_{21} & c_{21}d_{11} & c_{21}d_{21} \\
c_{12}d_{12} & c_{12}d_{22} & c_{22}d_{12} & c_{22}d_{22} \\
c_{13}d_{13} & c_{13}d_{23} & c_{23}d_{13} & c_{23}d_{23} \\
t_{1111}&t_{1211}&t_{2111}&t_{2211} \end{matrix}\right|
\left|\begin{matrix} 
c_{11}d_{11} & c_{11}d_{21} & c_{21}d_{11} & c_{21}d_{21} \\
c_{12}d_{12} & c_{12}d_{22} & c_{22}d_{12} & c_{22}d_{22} \\
c_{13}d_{13} & c_{13}d_{23} & c_{23}d_{13} & c_{23}d_{23} \\
t_{1122}&t_{1222}&t_{2122}&t_{2222} \end{matrix}\right|
\hbox to 30mm{}\notag \\
-\left|\begin{matrix} 
c_{11}d_{11} & c_{11}d_{21} & c_{21}d_{11} & c_{21}d_{21} \\
c_{12}d_{12} & c_{12}d_{22} & c_{22}d_{12} & c_{22}d_{22} \\
c_{13}d_{13} & c_{13}d_{23} & c_{23}d_{13} & c_{23}d_{23} \\
t_{1112}&t_{1212}&t_{2112}&t_{2212} \end{matrix}\right|
\left|\begin{matrix} 
c_{11}d_{11} & c_{11}d_{21} & c_{21}d_{11} & c_{21}d_{21} \\
c_{12}d_{12} & c_{12}d_{22} & c_{22}d_{12} & c_{22}d_{22} \\
c_{13}d_{13} & c_{13}d_{23} & c_{23}d_{13} & c_{23}d_{23} \\
t_{1121}&t_{1221}&t_{2121}&t_{2221} \end{matrix}\right|=0,
\\
\det(M)\ne 0,
\end{eqnarray}
where 
\begin{equation} \label{eq:MatrixM}
M=\begin{pmatrix} 
c_{11}d_{11} & c_{11}d_{21} & c_{21}d_{11} & c_{21}d_{21} \\
c_{12}d_{12} & c_{12}d_{22} & c_{22}d_{12} & c_{22}d_{22} \\
c_{13}d_{13} & c_{13}d_{23} & c_{23}d_{13} & c_{23}d_{23} \\
c_{14}d_{14} & c_{14}d_{24} & c_{24}d_{14} & c_{24}d_{24} \end{pmatrix}
\end{equation}
is a $4\times 4$ matrix.

For a $2\times 2$ matrix $X=(\xxx_1,\xxx_2)$, put 
$\vect(X)=\begin{pmatrix} \xxx_1\\ \xxx_2\end{pmatrix}$.

\begin{thm} \label{thm:suffcond}
Suppose that $T_{11},T_{12},T_{21},T_{22}$ are linearly independent.
There are $c_{ik}$, $d_{ik}\in \KKK$ ($i=1,2$, $k=1,2,3,4$) such that the condition {\rm (E)} holds, if and only if $\rank_{\KKK}(\underline{T})=4$.
\end{thm}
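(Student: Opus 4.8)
The plan is to read condition~{\rm (E)} as the coordinate form of a structural fact about $\underline T$: there is a basis of $\KKK^{2\times 2}$ (the matrix space carried by the first two tensor slots) consisting of four rank-one matrices, with respect to which $\underline T$ expands with all four coefficient matrices (on the last two slots) of rank at most $1$. First I would record that the hypothesis already forces $\rank_\KKK(\underline T)\ge 4$: if $\underline T=\sum_{m=1}^{r}\aaa_m\otimes\bbb_m\otimes\ccc_m\otimes\ddd_m$, then each $T_{k\ell}$ lies in the linear span of the $r$ matrices $\aaa_m\bbb_m^\top$, so the $4\times 4$ matrix $\widehat T$ with columns $\vect(T_{11}),\vect(T_{12}),\vect(T_{21}),\vect(T_{22})$ satisfies $\rank\widehat T\le r$; since $\widehat T$ has rank $4$ by hypothesis, $r\ge 4$. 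Hence it is enough to prove that {\rm (E)} is solvable in the $c_{ik},d_{ik}$ if and only if $\rank_\KKK(\underline T)=4$.

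Next I would fix the dictionary. For parameters $c_{ik},d_{ik}$, the $k$-th row of the matrix $M$ of \eqref{eq:MatrixM} is the rank-one matrix $(c_{ik}d_{jk})_{i,j}$ written out in the fixed index order $11,12,21,22$; thus $\det M\ne 0$ says exactly that these four rank-one matrices form a basis of $\KKK^{2\times 2}$. Assuming $\det M\ne 0$, expand $\underline T$ in that basis: there are unique $2\times 2$ matrices $U_1,\dots,U_4$ with $t_{ijk\ell}=\sum_{m=1}^{4}c_{im}d_{jm}(U_m)_{k\ell}$. Writing $M^{(m)}_{k\ell}$ for the matrix obtained from $M$ by replacing its $m$-th row with $(t_{11k\ell},t_{12k\ell},t_{21k\ell},t_{22k\ell})$, the heart of the argument is the Cramer-type identity
$$\det M^{(m)}_{k\ell}=(\det M)\,(U_m)_{k\ell}\qquad(1\le m\le 4,\ 1\le k,\ell\le 2),$$
immediate from multilinearity and alternation of the determinant because that replacement row equals $\sum_{p=1}^{4}(U_p)_{k\ell}\cdot(\text{row }p\text{ of }M)$ and only the term $p=m$ survives. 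Consequently the $m$-th equation of {\rm (E)} reads
$$\det M^{(m)}_{11}\det M^{(m)}_{22}-\det M^{(m)}_{12}\det M^{(m)}_{21}=(\det M)^{2}\det U_m=0,$$
so the $c_{ik},d_{ik}$ satisfy {\rm (E)} if and only if $\det M\ne 0$ and $\rank U_m\le 1$ for every $m$.

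Finally I would match this with the rank. If {\rm (E)} holds, factor each $U_m=\xxx_m\zzz_m^\top$ (possibly with $\xxx_m$ or $\zzz_m$ zero); then $t_{ijk\ell}=\sum_{m=1}^{4}c_{im}d_{jm}(\xxx_m)_k(\zzz_m)_\ell$, so $\underline T$ is a sum of at most four rank-one $4$-tensors and $\rank_\KKK(\underline T)\le 4$; with $\rank_\KKK(\underline T)\ge 4$ this gives $\rank_\KKK(\underline T)=4$. Conversely, assume $\rank_\KKK(\underline T)=4$ and fix a decomposition $\underline T=\sum_{m=1}^{4}\aaa_m\otimes\bbb_m\otimes\ccc_m\otimes\ddd_m$. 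Since each $T_{k\ell}$ lies in the span of the four matrices $\aaa_m\bbb_m^\top$ while the $T_{k\ell}$ are linearly independent, these $\aaa_m\bbb_m^\top$ form a basis of $\KKK^{2\times 2}$; taking $c_{im}=(\aaa_m)_i$ and $d_{jm}=(\bbb_m)_j$ makes $\det M\ne 0$, and the expansion of $\underline T$ in that basis has coefficient matrices $U_m=\ccc_m\ddd_m^\top$, which have rank at most $1$. Hence {\rm (E)} holds, completing the proof.

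The step requiring the most care is the index bookkeeping behind the Cramer-type identity: one must fix conventions for which two slots carry the basis of rank-one matrices (the rows of $M$, built from the $c$'s and $d$'s) and which two carry the coefficient matrices $U_m$, and then check that the determinants in {\rm (E)} insert the vector $(t_{11k\ell},t_{12k\ell},t_{21k\ell},t_{22k\ell})$ into $M$ in the column order $11,12,21,22$ used to build $M$. Everything else is formal linear algebra valid over an arbitrary field $\KKK$; no genericity, real-closedness, or characteristic assumption is used, and the linear independence of $T_{11},T_{12},T_{21},T_{22}$ enters only to force $\det M\ne 0$ when passing from a rank-$4$ decomposition to {\rm (E)}, and to guarantee $\rank_\KKK(\underline T)\ge 4$.
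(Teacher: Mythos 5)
Your proof is correct and follows essentially the same route as the paper: the lower bound $\rank_{\KKK}(\underline{T})\ge 4$ from linear independence of the slices, and then, via the matrix $M$ and a Cramer-type identity, the identification of each of the four polynomial equations in (E) with the vanishing of the determinant of a recovered $2\times 2$ coefficient matrix, so that (E) is solvable exactly when $\underline{T}$ has a four-term rank-one decomposition. The only divergence is bookkeeping: the paper attaches $c_{ik},d_{jk}$ to the third and fourth slots and recovers rank-one matrices $A_k$ in the first two (equations \eqref{eq:B}--\eqref{eq:A}), whereas you attach them to the first two slots and recover $U_m$ in the last two --- your convention is the one literally consistent with the rows $(t_{11k\ell},t_{12k\ell},t_{21k\ell},t_{22k\ell})$ appearing in (E), and the two readings are equivalent by interchanging the two pairs of slots, so this is a transposed form of the same argument rather than a different one.
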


\begin{proof}
Recall that
$\rank_{\KKK}(\underline{T})\leq r$ if and only if there are $x_{ik},y_{ik}\in \KKK$ and rank one matrices $A_k$
for $i=1,2$ and $k=1,2,\ldots,r$ such that $T_{ij}=\sum_{k=1}^r x_{ik}y_{jk}A_k$ for $i,j=1,2$.
Since $T_{11},T_{12},T_{21},T_{22}$ lie in the vector space generated by $A_k$'s,
we have $\rank_{\KKK}(\underline{T})\geq 4$.

Consider the problem $T_{ij}=\sum_{k=1}^4 c_{ik}d_{jk}A_k$ for $i,j=1,2$.
Then we have 
\begin{equation} \label{eq:B}
(\vect(T_{11}),\vect(T_{12}),\vect(T_{21}),\vect(T_{22}))
=(\vect(A_{1}),\vect(A_{2}),\vect(A_{3}),\vect(A_{4}))M.
\end{equation}
Since $(\vect(T_{11}),\vect(T_{12}),\vect(T_{21}),\vect(T_{22}))$ is nonsingular,
$M$ is nonsingular, and 
\begin{equation} \label{eq:A}
(\vect(A_{1}),\vect(A_{2}),\vect(A_{3}),\vect(A_{4}))=
(\vect(T_{11}),\vect(T_{12}),\vect(T_{21}),\vect(T_{22}))M^{-1}.
\end{equation}
Therefore, for each $k=1,2,3,4$, the equation ($k+1$) is zero if and only if $\rank(A_k)\leq1$.
Conversely, for $c_{ik}$, $d_{ik}\in \KKK$ such that the condition {\rm (E)} holds,
determining $A_k$ by \eqref{eq:A}, $A_k$ is a rank one matrix and 
$$\underline{T}=\sum_{k=1}^4 A_k\odot \begin{pmatrix} c_{1k}\\ c_{2k}\end{pmatrix}\odot
\begin{pmatrix} d_{1k}\\ d_{2k}\end{pmatrix},$$
which implies that $\rank_{\KKK}(\underline{T})=4$.
\end{proof}

\section{Numerical approach}

We compute $\Delta(A;B)$, $\Delta(C;D)$, $\Delta(A;C)$, $\Delta(B;D)$,
$\Delta(A+xC;B+xD)$, $\Delta(A+xB;C+xD)$, and
$\Delta(A+xB+z(C+xD);yA+B+z(yC+D))$ for $\begin{array}{c|c}
A& B\vrule width0pt depth4pt\\
\hline
C&D\vrule width0pt height12pt
\end{array}.$

There are $24$ flattening pattern corresponding to permutations $(i,j,k,\ell)$.
The following $4$ patterns are essentially same, since they correspond to the transpose of
matrices.
 
$$(t_{ijk\ell})=\begin{array}{cc|cc}
t_{1111}&t_{1211} &t_{1112} &t_{1212}\\
t_{2111}&t_{2211}&t_{2112}&t_{2212}\vrule width0pt depth4pt\\
\hline
t_{1121}&t_{1221} &t_{1122} &t_{1222}\vrule width0pt height12pt\\
t_{2121}&t_{2221}&t_{2122}&t_{2222}\vrule width0pt depth4pt\\
\end{array}, \quad
(t_{jik\ell})=\begin{array}{cc|cc}
t_{1111}&t_{2111} &t_{1112} &t_{2112}\\
t_{1211}&t_{2211}&t_{1212}&t_{2212}\vrule width0pt depth4pt\\
\hline
t_{1121}&t_{2121} &t_{1122} &t_{2122}\vrule width0pt height12pt\\
t_{1221}&t_{2221}&t_{1222}&t_{2222}\vrule width0pt depth4pt\\
\end{array},$$

$$(t_{ij\ell k})=\begin{array}{cc|cc}
t_{1111}&t_{1211} &t_{1121} &t_{1221}\\
t_{2111}&t_{2211}&t_{2121}&t_{2221}\vrule width0pt depth4pt\\
\hline
t_{1112}&t_{1212} &t_{1122} &t_{1222}\vrule width0pt height12pt\\
t_{2112}&t_{2212}&t_{2122}&t_{2222}\vrule width0pt depth4pt\\
\end{array},\quad
(t_{ji\ell k})=\begin{array}{cc|cc}
t_{1111}&t_{2111} &t_{1121} &t_{2121}\\
t_{1211}&t_{2211}&t_{1221}&t_{2221}\vrule width0pt depth4pt\\
\hline
t_{1112}&t_{2112} &t_{1122} &t_{2122}\vrule width0pt height12pt\\
t_{1212}&t_{2212}&t_{1222}&t_{2222}\vrule width0pt depth4pt\\
\end{array}$$

Thus there are essentially $6$ patterns:

$$(t_{ijk\ell})=\begin{array}{cc|cc}
t_{1111}&t_{1211} &t_{1112} &t_{1212}\\
t_{2111}&t_{2211}&t_{2112}&t_{2212}\vrule width0pt depth4pt\\
\hline
t_{1121}&t_{1221} &t_{1122} &t_{1222}\vrule width0pt height12pt\\
t_{2121}&t_{2221}&t_{2122}&t_{2222}\vrule width0pt depth4pt\\
\end{array},\quad 
(t_{kji\ell})=\begin{array}{cc|cc}
t_{1111}&t_{1211} &t_{1112} &t_{1212}\\
t_{1121}&t_{1221}&t_{1122}&t_{1222}\vrule width0pt depth4pt\\
\hline
t_{2111}&t_{2211} &t_{2112} &t_{2212}\vrule width0pt height12pt\\
t_{2121}&t_{2221}&t_{2122}&t_{2222}\vrule width0pt depth4pt\\
\end{array},$$

$$(t_{ikj\ell})=\begin{array}{cc|cc}
t_{1111}&t_{1121} &t_{1112} &t_{1122}\\
t_{2111}&t_{2121}&t_{2112}&t_{2122}\vrule width0pt depth4pt\\
\hline
t_{1211}&t_{1221} &t_{1212} &t_{1222}\vrule width0pt height12pt\\
t_{2211}&t_{2221}&t_{2212}&t_{2222}\vrule width0pt depth4pt\\
\end{array}, \quad
(t_{i\ell kj})=\begin{array}{cc|cc}
t_{1111}&t_{1112} &t_{1211} &t_{1212}\\
t_{2111}&t_{2112}&t_{2211}&t_{2212}\vrule width0pt depth4pt\\
\hline
t_{1121}&t_{1122} &t_{1221} &t_{1222}\vrule width0pt height12pt\\
t_{2121}&t_{2122}&t_{2221}&t_{2222}\vrule width0pt depth4pt
\end{array},$$

$$(t_{k\ell ij})=\begin{array}{cc|cc}
t_{1111}&t_{1112} &t_{1211} &t_{1212}\\
t_{1121}&t_{1122}&t_{1221}&t_{1222}\vrule width0pt depth4pt\\
\hline
t_{2111}&t_{2112} &t_{2211} &t_{2212}\vrule width0pt height12pt\\
t_{2121}&t_{2122}&t_{2221}&t_{2222}\vrule width0pt depth4pt\\
\end{array}, \quad
(t_{j\ell ki})=\begin{array}{cc|cc}
t_{1111}&t_{2111} &t_{1211} &t_{2211}\\
t_{1112}&t_{2112}&t_{1212}&t_{2212}\vrule width0pt depth4pt\\
\hline
t_{1121}&t_{2121} &t_{1221} &t_{2221}\vrule width0pt height12pt\\
t_{1122}&t_{2122}&t_{1222}&t_{2222}\vrule width0pt depth4pt
\end{array}$$

The tensor $\underline{X}$ given in \eqref{eq:exampletensor} implies the following result.

\begin{prop} \label{prop:notConverttorank2sliceforanyflatten}
There is a tensor $\underline{T}$ in $\RRR^{2\times2\times2\times2}$ such that
$\rank(S_{11};S_{12})=\rank(S_{11};S_{21})=\rank(S_{21};S_{22})=\rank(S_{12};S_{22})=3$
for any flattening pattern $\underline{T}^\prime$ of $\underline{T}$ and any $g\in \GL(2,\RRR)^4$, where
$\underline{S}=\begin{array}{c|c} S_{11}&S_{12}\vrule width0pt depth4pt\\
\hline
S_{21}&S_{22}\vrule width0pt height12pt
\end{array}=g\cdot \underline{T}^\prime$.
\end{prop}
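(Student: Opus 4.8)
\quad The plan is to reduce the assertion, uniformly over all $24$ flattenings, to four one-parameter computations of the hyperdeterminant $\Delta$, in the spirit of the proof of Proposition~\ref{prop:notConverttorank2slice}; we take $\underline{T}=\underline{X}$ with $\underline{X}$ as in \eqref{eq:exampletensor}. Since, by Proposition~\ref{prop:Silva-etal}(3), a $2\times2\times2$ real tensor with $\Delta<0$ has rank $3$, it is enough to prove that each of the four corner slices $(S_{11};S_{12})$, $(S_{11};S_{21})$, $(S_{21};S_{22})$, $(S_{12};S_{22})$ of $\underline{S}=g\cdot\underline{T}'$ has $\Delta<0$, for every flattening pattern $\underline{T}'$ of $\underline{X}$ and every $g\in\GL(2,\RRR)^4$.

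First I would set up the reduction. Choosing a flattening $\underline{T}'$ only permutes the four tensor factors, and applying the same permutation to the four components of $g$ turns $g\cdot\underline{T}'$ into a relabelling of $\tilde g\cdot\underline{X}$ for some $\tilde g\in\GL(2,\RRR)^4$; under this relabelling, passing to one of the four corner slices of $\underline{S}$ becomes the operation of freezing one of the four indices $i_j$ ($j\in\{1,2,3,4\}$) of $\tilde g\cdot\underline{X}$ to a value in $\{1,2\}$. For $j\in\{1,2,3,4\}$ write $\underline{X}=((X^{(j)}_1);(X^{(j)}_2))$ for the flattening that puts the $j$-th factor outermost, so $X^{(j)}_1$ and $X^{(j)}_2$ are the two $2\times2\times2$ slices of $\underline{X}$ in the $j$-th direction. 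The three $\GL(2,\RRR)$-factors of $\tilde g$ acting on the surviving positions preserve rank and, by Proposition~\ref{prop:Silva}, the sign of $\Delta$, while the remaining factor contributes to the frozen slice the combination $\lambda X^{(j)}_1+\mu X^{(j)}_2$ with $(\lambda,\mu)$ a row of an invertible $2\times2$ matrix, hence $(\lambda,\mu)\ne(0,0)$. Consequently the corner slice is $\GL(2,\RRR)^3$-equivalent to $\lambda X^{(j)}_1+\mu X^{(j)}_2$, and it suffices to show that $\Delta(\lambda X^{(j)}_1+\mu X^{(j)}_2)<0$ for every $j\in\{1,2,3,4\}$ and every $(\lambda,\mu)\ne(0,0)$.

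Then I would carry out the four computations, reading off $X^{(j)}_1$ and $X^{(j)}_2$ as pairs of $2\times2$ matrices from the flattenings of $\underline{X}$ (the six essentially distinct patterns being displayed above) and expanding $\Delta$ by its bilinear formula. The result is
\[
\Delta\bigl(\lambda X^{(j)}_1+\mu X^{(j)}_2\bigr)=
\begin{cases}
-4(\lambda^2+2\mu^2)^2, & j=1,3,\\
-8(\lambda^2+\mu^2)^2, & j=2,4,
\end{cases}
\]
each of which is $<0$ for $(\lambda,\mu)\ne(0,0)$; the cases $j=3$ and $j=4$ recover the identities $\Delta(S_{11};S_{12})=-4(q_{11}^2+2q_{12}^2)^2$ and $\Delta(S_{11};S_{21})=-8(p_{11}^2+p_{12}^2)^2$ of the proof of Proposition~\ref{prop:notConverttorank2slice}. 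Together with the reduction and Proposition~\ref{prop:Silva-etal}(3) this yields $\rank(S_{11};S_{12})=\rank(S_{11};S_{21})=\rank(S_{21};S_{22})=\rank(S_{12};S_{22})=3$, proving the proposition.

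The main obstacle is the bookkeeping in the reduction step: one has to check that permuting a flattening is compensated exactly by permuting the components of $g$, and that the $\GL(2,\RRR)^3$-invariance of the sign of $\Delta$ (Proposition~\ref{prop:Silva}) is applied to the three factors that genuinely survive the slicing rather than to all four. Once this is in place the four quartics are routine, and the collapse to just two distinct forms $-4(\lambda^2+2\mu^2)^2$ and $-8(\lambda^2+\mu^2)^2$ reflects a symmetry of $\underline{X}$ interchanging its first and third (respectively its second and fourth) directions.
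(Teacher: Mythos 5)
Your proposal is correct and follows essentially the argument the paper intends (the paper leaves the verification implicit, merely listing the six essentially distinct flattening patterns and asserting that $\underline{X}$ works): reduce, via the $\GL(2,\RRR)^3$-invariance of the sign of $\Delta$ from Proposition~\ref{prop:Silva}, to showing $\Delta(\lambda X^{(j)}_1+\mu X^{(j)}_2)<0$ for every direction $j$ and every $(\lambda,\mu)\ne(0,0)$, and then check the four quartics. Your computed forms $-4(\lambda^2+2\mu^2)^2$ and $-8(\lambda^2+\mu^2)^2$ are consistent with the two values displayed in the proof of Proposition~\ref{prop:notConverttorank2slice}, so the verification is complete.
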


Therefore, there is a tensor in $\RRR^{2\times2\times2\times2}$
which does not apply Proposition~\ref{prop:leq2implies4a}.

\begin{thm} \label{thm:estimate}
Let $\underline{T}$ be a $2\times 2\times2\times2$ real tensor.
Suppose that the $4\times4$ matrix obtained from $\underline{T}$ is nonsingular.
Put 
\begin{equation}\label{eq:function}
f((c_{1k},d_{1k},c_{2k},d_{2k})^\top_{1\leq k\leq 4})=\frac{\displaystyle\sum_{j=1}^4 (n_{1j}n_{4j}-n_{2j}n_{3j})^2}{\displaystyle(\sum_{i,j=1}^4 n_{ij}^2)^2}.
\end{equation}
where 
$$(n_{ij})=(\vect(T_{11}),\vect(T_{12}),\vect(T_{21}),\vect(T_{22}))M^{-1}.$$
If $\rank_{\RRR}(\underline{T})=4$ then $f=0$ and $\det(M)\ne0$ at some $(c_{1k},d_{1k}c_{2k},d_{2k})^\top_{1\leq k\leq 4}\in\RRR^{16}$.
\end{thm}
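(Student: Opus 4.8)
The plan is to deduce this from the rank-$4$ characterization already established, namely Theorem~\ref{thm:suffcond} and the computation in its proof. Assume $\rank_{\RRR}(\underline{T})=4$. Since the $4\times4$ matrix $(\vect(T_{11}),\vect(T_{12}),\vect(T_{21}),\vect(T_{22}))$ is nonsingular by hypothesis, the slices $T_{11},T_{12},T_{21},T_{22}$ are linearly independent, so the hypothesis of Theorem~\ref{thm:suffcond} is met. Hence there are scalars $c_{ik},d_{ik}$ ($i=1,2$, $k=1,2,3,4$) and rank one matrices $A_1,\dots,A_4$ with $T_{ij}=\sum_{k=1}^{4}c_{ik}d_{jk}A_k$, equivalently \eqref{eq:B} holds with $M$ the matrix of \eqref{eq:MatrixM}. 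I take this $16$-tuple $(c_{1k},d_{1k},c_{2k},d_{2k})^{\top}_{1\le k\le4}$ as the point at which to verify the conclusion.

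First I check that $\det(M)\ne0$ there. Equation \eqref{eq:B} writes the nonsingular matrix $(\vect(T_{11}),\vect(T_{12}),\vect(T_{21}),\vect(T_{22}))$ as the product of the two $4\times4$ matrices $(\vect(A_1),\vect(A_2),\vect(A_3),\vect(A_4))$ and $M$; hence both factors are nonsingular, so $\det(M)\ne0$ and $(\vect(A_1),\vect(A_2),\vect(A_3),\vect(A_4))$ is nonsingular. Consequently \eqref{eq:A} gives $(n_{ij})=(\vect(A_1),\vect(A_2),\vect(A_3),\vect(A_4))$, i.e. the $k$-th column of $(n_{ij})$ equals $\vect(A_k)$.

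Next I evaluate $f$. Writing $A_k=\begin{pmatrix}\alpha_k&\beta_k\\ \gamma_k&\delta_k\end{pmatrix}$, the convention $\vect(X)=\begin{pmatrix}\xxx_1 \\ \xxx_2\end{pmatrix}$ gives $\vect(A_k)=(\alpha_k,\gamma_k,\beta_k,\delta_k)^{\top}$, so $n_{1k}=\alpha_k$, $n_{2k}=\gamma_k$, $n_{3k}=\beta_k$, $n_{4k}=\delta_k$ and therefore $n_{1k}n_{4k}-n_{2k}n_{3k}=\alpha_k\delta_k-\beta_k\gamma_k=\det(A_k)$. Since each $A_k$ has rank one, $\det(A_k)=0$ for all $k$, so the numerator $\sum_{j=1}^{4}(n_{1j}n_{4j}-n_{2j}n_{3j})^{2}$ in \eqref{eq:function} vanishes. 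Moreover $(n_{ij})$ is nonsingular, hence nonzero, so $\sum_{i,j=1}^{4}n_{ij}^{2}>0$; thus the denominator of \eqref{eq:function} is positive, $f$ is well defined at this point, and $f=0$ there. This proves the theorem.

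The argument is essentially bookkeeping; the only point that needs care is the identification $n_{1k}n_{4k}-n_{2k}n_{3k}=\det(A_k)$, which hinges on matching the ordering in $\vect$ with the $2\times2$ matrix entries. Once that is pinned down, vanishing of $f$ is forced by the rank one condition on the $A_k$, and the nonsingularity hypothesis on $\underline{T}$ is precisely what keeps $M$ invertible and the denominator of $f$ away from $0$, so there is no substantial obstacle beyond this.
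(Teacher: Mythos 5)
Your proof is correct and follows essentially the same route as the paper: both deduce the statement from Theorem~\ref{thm:suffcond} (equivalently, from the rank-$4$ decomposition $T_{ij}=\sum_k c_{ik}d_{jk}A_k$ and equations \eqref{eq:B}--\eqref{eq:A}), so that the columns of $(n_{ij})$ are the $\vect(A_k)$ and the numerator of $f$ vanishes because $\det(A_k)=0$. You merely make explicit the points the paper leaves terse, namely that $\det(M)\ne0$ follows from the nonsingular flattening and that the denominator of \eqref{eq:function} is positive.
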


\begin{proof}
Consider the equation \eqref{eq:B}.
By the assumption, the matrix $M$ is nonsingular.
By Theorem~\ref{thm:suffcond}, if $\rank_{\RRR}(\underline{T})=4$ then there is $(c_{ik},d_{jk})^\top\in\RRR^{16}$
such that $n_{1k}n_{4k}=n_{2k}n_{3k}$ for $1\leq k\leq 4$.
\end{proof}

Let $M$ be a matrix in \eqref{eq:MatrixM}, and
put $\underline{T}=\underline{X}$  in \eqref{eq:exampletensor}.  
Note that a $4\times4$ matrix
$$(\vect(T_{11}),\vect(T_{12}),\vect(T_{21}),\vect(T_{22}))=\begin{pmatrix}
1&0&0&1\\
0&1&-1&0\\
0&-1&2&0\\
1&0&0&2\\
\end{pmatrix}$$
is nonsingular.  Thus $\rank_{\RRR}(\underline{T})\geq 4$ and $M$ must be nonsingular.
By \eqref{eq:A} we consider the function $f$ from an open subset
$S:=\{ (c_{ik},d_{jk})^\top\mid \det(M)\ne 0\}$ of $\RRR^{16}$ to $\RRR$ defined as \eqref{eq:function}.
Although $f$ might have no minimum value in general,
Theorem~\ref{thm:estimate} yields us that if $\rank_{\RRR}(T)=4$ then $f$ must take the value zero,
and $\inf f>0$ says that $5$ is a typical rank of $\RRR^{2\times2\times2\times2}$.
So we consider the problem: \\
\begin{center}
\begin{tabular}{ll}
infimize  & $f$ \\
subject to & $\det(M)\ne0$ \\
\end{tabular}
\end{center}
We estimate by using the command \lq\lq{}FindMinimum\rq\rq{} in the software Mathematica \cite{Wolfram-Mathematica} and obtains the minimum
value $0.04$ in the $10000$ times iterations.
\begin{table}
\caption{Program in Mathematica}
\begin{verbatim}
T = {{1, 0, 0, 1}, {0, 1, -1, 0}, {0, -1, 1, 0}, {2, 0, 0, 2}};
myf[T_, x11_, x12_, x13_, x14_, x21_, x22_, x23_, x24_, y11_, y12_, 
   y13_, y14_, y21_, y22_, y23_, y24_] := Block[{U, V, mx}, 
   U = {{x11*y11, x11*y21, x21*y11, x21*y21}, 
        {x12*y12, x12*y22, x22*y12, x22*y22}, 
        {x13*y13, x13*y23, x23*y13, x23*y23}, 
        {x14*y14, x14*y24, x24*y14, x24*y24}};
   V = Simplify[T.Inverse[U]]; mx := Sum[V[[i, j]]^2, {i, 1, 4}, {j, 1, 4}];
   Sum[(V[[1, k]]*V[[4, k]] - V[[2, k]]*V[[3, k]])^2, {k, 1, 4}]/mx^2];

tryfind[T_] := Block[{vars, UU, U0, det, a, U, V, f}, a = 0;
  While[a == 0, vars = Table[Random[Real, {-1, 1}], {16}];
   U0 = {{vars[[1]]*vars[[9]], vars[[1]]*vars[[13]], vars[[5]]*vars[[9]], 
      vars[[5]]*vars[[13]]}, {vars[[2]]*vars[[10]], vars[[2]]*vars[[14]], 
      vars[[6]]*vars[[10]], vars[[6]]*vars[[14]]}, {vars[[3]]*vars[[11]], 
      vars[[3]]*vars[[15]], vars[[7]]*vars[[11]], vars[[7]]*vars[[15]]}, 
     {vars[[4]]*vars[[12]], vars[[4]]*vars[[16]], vars[[8]]*vars[[12]], 
      vars[[8]]*vars[[16]]}};
   a = Det[U0]];
  FindMinimum[
   myf[T,x11,x12,x13,x14,x21,x22,x23,x24,y11,y12,y13,y14,y21,y22,y23,y24], 
    {{x11, vars[[1]]}, {x12, vars[[2]]}, {x13, vars[[3]]}, {x14, vars[[4]]}, 
     {x21, vars[[5]]}, {x22, vars[[6]]}, {x23, vars[[7]]}, {x24, vars[[8]]}, 
     {y11, vars[[9]]}, {y12, vars[[10]]},{y13, vars[[11]]},{y14, vars[[12]]}, 
     {y21, vars[[13]]},{y22, vars[[14]]},{y23, vars[[15]]},{y24, vars[[16]]}}]]

res = Table[tryfind[T], {10000}];
val = Table[res[[k, 1]], {k, 1, Length[res]}];
Min[val]
\end{verbatim}
\end{table}
Thus we have

\begin{conj}
The maximal rank of $\RRR^{2\times2\times2\times2}$ is $5$ and
the typical rank of $\RRR^{2\times2\times2\times2}$ is $\{4,5\}$.
\end{conj}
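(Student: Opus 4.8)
The two claims are linked. By the interval property in Theorem~\ref{typical rank by Friedland}(5), identifying the typical ranks amounts to computing $\grank(2,2,2,2)$ and $\mtrank(2,2,2,2)$, and once $\mtrank(2,2,2,2)=5$ the bound $\rank_\RRR\le 5$ of Theorem~\ref{thm:Kong-etal} makes $5$ the maximal rank as well. So the plan is: (i) show $\grank(2,2,2,2)=4$; (ii) show $\mtrank(2,2,2,2)=5$; (iii) assemble. Step (i) is essentially standard. If $\underline{T}=\sum_{k=1}^r A_k\otimes u_k\otimes v_k$ is a rank-$r$ real tensor, then $(\vect(T_{11}),\vect(T_{12}),\vect(T_{21}),\vect(T_{22}))=(\vect(A_1),\dots,\vect(A_r))\big((u_k)_a(v_k)_b\big)_{k,(ab)}$, so the $4\times 4$ flattening has rank at most $r$; since the flattening of the tensor $\underline{X}$ in \eqref{eq:exampletensor} is nonsingular, a Zariski-generic tensor has nonsingular flattening and $\grank(2,2,2,2)\ge 4$. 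Conversely, by Theorem~\ref{thm:Brylinski} every complex $2\times2\times2\times2$ tensor has rank $\le 4$, so the polynomial map $\big((\CCC^2)^{\times 4}\big)^{\times 4}\to\CCC^{16}$ summing four rank-one tensors is dominant, hence submersive at a generic point; having integer coefficients, its real form $\big((\RRR^2)^{\times 4}\big)^{\times 4}\to\RRR^{16}$ is submersive at a generic real point, so $\mathcal{T}_{\le 4}$ has nonempty interior and $\grank(2,2,2,2)\le 4$.

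Step (ii) is the heart, and the tool is Theorem~\ref{thm:judge typical rank}. Set $L=\{\underline{T}:\det(\vect(T_{11}),\vect(T_{12}),\vect(T_{21}),\vect(T_{22}))=0\}$, a closed semialgebraic set of dimension $<16$, and promote the function $f$ of \eqref{eq:function} to a function on $\mathcal{T}\smallsetminus L$ by $F(\underline{T})=\inf\{f((c_{ik},d_{ik});\underline{T}):\det M\ne 0\}$. Then $F\ge 0$ is immediate, and $F$ vanishes on $\mathcal{T}_{\le 4}\smallsetminus L$ by Theorem~\ref{thm:estimate} (a tensor off $L$ has rank $\ge 4$, and if the rank is $4$ then $f=0$ at some point with $\det M\ne 0$). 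The real difficulty is \emph{continuity} of $F$, since an infimum over the non-compact locus $\{\det M\ne 0\}$ is only upper semicontinuous in general. I would fix this by clearing $\det M$: with $P:=(\vect(T_{11}),\vect(T_{12}),\vect(T_{21}),\vect(T_{22}))\,\mathrm{adj}(M)$ one has $f=\big(\sum_{j=1}^4(P_{1j}P_{4j}-P_{2j}P_{3j})^2\big)/\big(\sum_{i,j=1}^4 P_{ij}^2\big)^2$, a ratio of two forms of equal degree in the variables $(c_{ik},d_{ik})$ that is regular wherever $\rank(M)\ge 3$; hence $f$ descends to a function on $\RRR\mathbb{P}^{15}$ off the closed set $\{\rank(M)\le 2\}$. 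Resolving the indeterminacy of $[c:d]\mapsto[\mathrm{adj}\,M(c,d)]$ produces a compact space $\widetilde{X}$ \emph{independent of} $\underline{T}$, on which $f$ lifts to a function $\widetilde f(\cdot\,;\underline{T})$ that is nonnegative and jointly continuous on $\widetilde{X}\times(\mathcal{T}\smallsetminus L)$ (the denominator stays positive because the flattening is invertible off $L$). Then $F(\underline{T})=\min_{\widetilde{X}}\widetilde f(\cdot\,;\underline{T})$ is continuous by a standard compactness argument, and all hypotheses of Theorem~\ref{thm:judge typical rank} with $p=4$ are met except possibly $F\not\equiv 0$.

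It remains to exhibit one tensor with $F>0$; the candidate is $\underline{X}$, for which Proposition~\ref{prop:notConverttorank2sliceforanyflatten} rules out the elementary bound of Proposition~\ref{prop:leq2implies4a} in every flattening, and for which the numerical search above returns $\inf f\approx 0.04$. By construction $F(\underline{X})>0$ is equivalent to $\widetilde f(\cdot\,;\underline{X})$ being nowhere zero on $\widetilde{X}$, i.e.\ to the conjunction of: (a) the system ``$P_{1j}P_{4j}=P_{2j}P_{3j}$ for $j=1,\dots,4$'' has no real solution with $\det M\ne 0$ at $\underline{T}=\underline{X}$ --- equivalently, by Theorem~\ref{thm:suffcond}, $\rank_\RRR(\underline{X})\ne 4$, hence $\rank_\RRR(\underline{X})=5$; and (b) the limiting values of $f$ along $\{\rank(M)\le 2\}$, read off from the exceptional divisor of $\widetilde{X}$, are positive at $\underline{X}$, a lower-dimensional check once the resolution is written out. \textbf{The main obstacle is (a): converting the numerical evidence into a proof that $\inf f>0$.} Three routes seem possible: real quantifier elimination / cylindrical algebraic decomposition on the $15$ projective variables (decisive in principle, heavy in practice); constructing and then verifying in exact arithmetic a Positivstellensatz certificate for $(\text{numerator})-\varepsilon\,(\text{denominator})\ge 0$ with a small rational $\varepsilon$ read from the numerics; or a structural argument exploiting the $E,R$-shape of $\underline{X}$ (note $R^2=-E$) that lists the finitely many complex rank-$4$ decompositions guaranteed by Theorem~\ref{thm:Brylinski} and shows none of them is real. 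Once $F(\underline{X})>0$ is secured, Theorem~\ref{thm:judge typical rank} yields a typical rank $>4$, so $\mtrank(2,2,2,2)=5$ by Theorem~\ref{thm:Kong-etal}, the maximal rank is $5$, and by Theorem~\ref{typical rank by Friedland}(5) together with step (i) the typical ranks are exactly $\{4,5\}$.
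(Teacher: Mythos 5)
The statement you are asked to prove is stated in the paper as a \emph{conjecture}: the authors do not prove it, they only assemble supporting evidence, namely the scaffolding you also use (Theorem~\ref{thm:judge typical rank}, Theorem~\ref{thm:suffcond}, Theorem~\ref{thm:estimate}, Proposition~\ref{prop:notConverttorank2sliceforanyflatten}) together with a Mathematica \texttt{FindMinimum} search on the tensor $\underline{X}$ of \eqref{eq:exampletensor} that returns a minimum value of about $0.04$ over $10000$ random starts. Your proposal follows essentially this same route, and you correctly and honestly isolate the step that is missing both in your write-up and in the paper: a rigorous certificate that $\inf f>0$ for $\underline{X}$ on $\{\det M\ne 0\}$ (equivalently, by Theorem~\ref{thm:suffcond}, that $\underline{X}$ admits no real rank-$4$ decomposition, so $\rank_{\RRR}(\underline{X})=5$). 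A numerical infimum from local optimization is not a proof, so as it stands your argument, like the paper's, establishes neither the maximal-rank claim nor that $5$ is typical; it remains a conjecture.

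Two further points on the parts where you go beyond the paper. First, your observation that Theorem~\ref{thm:judge typical rank} needs a \emph{continuous} function of $\underline{T}$ alone, and that $F(\underline{T})=\inf_{c,d}f$ is a priori only upper semicontinuous, is a real issue that the paper glosses over ("$\inf f>0$ says that $5$ is a typical rank"); your proposed fix via clearing $\det M$ with the adjugate and compactifying the parameter space is plausible in outline but is itself only sketched (joint continuity on the exceptional locus, positivity of the denominator there, and your item (b) are asserted, not checked), so it does not yet upgrade the argument. Second, note that even granting $\rank_{\RRR}(\underline{X})=5$ one gets only that the \emph{maximal} rank is $5$; maximal rank can exceed the maximal typical rank, so the typicality of $5$ really does hinge on making the Theorem~\ref{thm:judge typical rank} argument (hence the continuity of $F$, or some open-set argument) rigorous, as you implicitly recognize. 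Your step (i) that $4$ is the minimal typical rank is fine and is consistent with the paper's lower bound via the nonsingular flattening. In short: same strategy as the paper, somewhat more careful about the analytic gaps, but the decisive positivity statement is still only numerical, so there is a genuine gap.
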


\section{High dimensional tensors}
A lower bound of the maximal rank of $n$-tensors with size $2\times\cdots\times 2$ is
$$\frac{2^n}{2n-n+1}=\frac{2^n}{n+1}$$
(cf. \cite[Proposition~1.2]{Brylinski:2002}) and a canonical upper bound of those is
$2^n$.
We give an upper bound by using the maximal rank of $\FFF^{2\times2\times2\times2}$ tensors.

\begin{prop} \label{prop:highertensorrank}
Let $1\leq s< n$.
$$\mrank_{\KKK}(m_1,m_2,\ldots,m_n)\leq \mrank_{\KKK}(m_1,\ldots,m_s)\prod_{t=s+1}^n m_t$$
\end{prop}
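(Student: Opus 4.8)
The plan is to reduce the general statement to repeated application of flattening (unfolding) the tensor along the first $s$ indices, and then to bound the rank of each resulting "slab" separately. Concretely, view a tensor $T\in\KKK^{m_1\times\cdots\times m_n}$ as a collection of $m_{s+1}\cdots m_n$ tensors of size $m_1\times\cdots\times m_s$, indexed by the remaining coordinates $(i_{s+1},\ldots,i_n)$; call these the slices $T_{i_{s+1},\ldots,i_n}$. Each slice has rank at most $\mrank_\KKK(m_1,\ldots,m_s)$ by definition of the maximal rank. The key observation is that if a slice $T_{i_{s+1},\ldots,i_n}$ is written as a sum of $r$ rank-one tensors $\sum_{k=1}^r v^{(1)}_k\otimes\cdots\otimes v^{(s)}_k$ in $\KKK^{m_1}\otimes\cdots\otimes\KKK^{m_s}$, then tensoring each summand with the standard basis vector $e_{i_{s+1}}\otimes\cdots\otimes e_{i_n}\in\KKK^{m_{s+1}}\otimes\cdots\otimes\KKK^{m_n}$ produces $r$ rank-one tensors in the full space $\KKK^{m_1}\otimes\cdots\otimes\KKK^{m_n}$ whose sum is exactly the "embedded" slice (the tensor that agrees with $T$ on the block $(i_{s+1},\ldots,i_n)$ and is zero elsewhere).

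The main steps, in order, are: (1) fix an expression of $T$ as the sum over all $(i_{s+1},\ldots,i_n)$ of its embedded slices; there are $\prod_{t=s+1}^n m_t$ such terms. (2) For each embedded slice, apply the bound $\mrank_\KKK(m_1,\ldots,m_s)$ together with the tensoring-with-a-basis-vector trick above to express it as a sum of at most $\mrank_\KKK(m_1,\ldots,m_s)$ rank-one tensors of $\KKK^{m_1\times\cdots\times m_n}$. (3) Sum over all blocks and use subadditivity of rank to conclude
\[
\rank_\KKK(T)\ \le\ \sum_{(i_{s+1},\ldots,i_n)}\mrank_\KKK(m_1,\ldots,m_s)\ =\ \mrank_\KKK(m_1,\ldots,m_s)\prod_{t=s+1}^n m_t.
\]
Since $T$ was arbitrary, taking the maximum over $T$ gives the claimed inequality.

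I do not expect a genuine obstacle here: the only point requiring a line of care is verifying that the tensor product of a rank-one tensor of $\KKK^{m_1}\otimes\cdots\otimes\KKK^{m_s}$ with a rank-one tensor of $\KKK^{m_{s+1}}\otimes\cdots\otimes\KKK^{m_n}$ is a rank-one tensor of the full space, and that the "embedding" of a slice as a block-supported tensor is compatible with decompositions in this way; this is immediate from the definition of rank-one (irreducible) tensors and the identification $\KKK^{m_1\times\cdots\times m_n}\cong(\KKK^{m_1\times\cdots\times m_s})\otimes(\KKK^{m_{s+1}\times\cdots\times m_n})$. One should also note that the case $s=n$ is excluded by hypothesis $1\le s<n$, so $\prod_{t=s+1}^n m_t$ is a genuine (nonempty) product; when $s=n-1$ it is just $m_n$, recovering the familiar "sum over the last-coordinate slices" bound. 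The statement and proof are field-independent, so the same argument works verbatim for $\KKK=\RRR$ and $\KKK=\CCC$.
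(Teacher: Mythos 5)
Your proposal is correct and is essentially the paper's own argument: decompose the tensor as the sum over the $\prod_{t=s+1}^n m_t$ embedded slices, write each slice as a sum of at most $\mrank_{\KKK}(m_1,\ldots,m_s)$ rank-one $s$-tensors, and tensor each summand with the corresponding standard basis vector $\eee_{i_{s+1},\ldots,i_n}$ before invoking subadditivity of rank. No further comment is needed.
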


\begin{proof}
Let $\underline{A}=(a_{i_1i_2\ldots i_n})$ be an $n$-tensor with size $m_1\times m_2\times \cdots \times m_n$.

Let $e_{i_1,\ldots,i_k}$, $1\leq i_t\leq a_t, 1\leq t\leq n$ be a standard basis of
$\KKK^{m_1\times\cdots\times m_n}$, that is, 
$\eee_{i_1,\ldots,i_n}$ has $1$ at the $(i_1,\ldots,i_n)$-element
and otherwise $0$.

The tensor $\underline{A}$ is described as
$$\sum_{i_1,\ldots,i_n} a_{i_1,\ldots,i_n}\eee_{i_1,\ldots,i_n} 
=\sum_{i_{s+1},\ldots,i_n} (\sum_{i_1,\ldots,i_s} a_{i_1,\ldots,i_n}\eee_{i_1,\ldots,i_s})\odot\eee_{i_{s+1},\ldots,i_n}.$$

For each $(j_{s+1},\ldots,j_n)$, the $s$-tensor $(a_{i_1\ldots i_sj_{s+1}\ldots j_n})$ is described as 
$$\sum_{k=1}^{\mrank(m_1,\ldots,m_s)} C_{j_{s+1},\ldots,j_n}^{(k)},$$
where $C_{j_{s+1},\ldots,j_n}^{(k)}$ are rank one tensors.
Then
$$\underline{A}
=\sum_{i_{s+1},\ldots,i_n} \sum_{k=1}^{\mrank(m_1,\ldots,m_s)} C_{i_{s+1},\ldots,i_n}^{(k)} \odot\eee_{i_{s+1},\ldots,i_n}$$
which is a sum of $\mrank_{\KKK}(m_1,\ldots,m_s)\prod_{t=s+1}^n m_t$ rank one tensors.
\end{proof}

\begin{cor}
For $n\geq 4$,
The maximal rank of $n$-tensors with size $2\times\cdots\times 2$
over the complex number field
is less than or equal to $2^{n-2}$.
\end{cor}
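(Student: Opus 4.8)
The plan is to derive the corollary as an immediate consequence of Proposition~\ref{prop:highertensorrank}, specialized to $m_1=m_2=\cdots=m_n=2$ and fed with the $2\times2\times2\times2$ bound. First I would dispose of the base case $n=4$: there the assertion $\mrank_{\CCC}(2,2,2,2)\leq 2^{4-2}=4$ is exactly Theorem~\ref{thm:Brylinski} (equivalently, the theorem above identifying the maximal rank of $\CCC^{2\times2\times2\times2}$ with $4$), so nothing further is needed.

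For $n\geq 5$ the hypothesis $1\leq s<n$ of Proposition~\ref{prop:highertensorrank} holds with $s=4$, and the proposition then gives
$$\mrank_{\CCC}(\underbrace{2,\ldots,2}_{n})\;\leq\;\mrank_{\CCC}(2,2,2,2)\prod_{t=5}^{n}2\;=\;\mrank_{\CCC}(2,2,2,2)\cdot 2^{\,n-4}.$$
Substituting $\mrank_{\CCC}(2,2,2,2)\leq 4$ from Theorem~\ref{thm:Brylinski} yields $\mrank_{\CCC}(2,\ldots,2)\leq 4\cdot 2^{n-4}=2^{n-2}$, which is the claim.

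I do not expect any genuine obstacle here: the whole argument is a single application of Proposition~\ref{prop:highertensorrank} with a well-chosen $s$, together with the arithmetic identity $4\cdot 2^{n-4}=2^{n-2}$ and the routine bookkeeping of the $n=4$ case. The only point requiring a little care is the choice $s=4$: the value $s=3$ is also admissible for all $n\geq 4$ but only produces the weaker bound $3\cdot 2^{n-3}$ via $\mrank_{\CCC}(2,2,2)=3$, so it is precisely $s=4$ that forces the exponent down to $n-2$, at the small price of handling $n=4$ directly from Theorem~\ref{thm:Brylinski}. An induction on $n$ that peels off one mode at a time would work equally well, but is unnecessary given Proposition~\ref{prop:highertensorrank}.
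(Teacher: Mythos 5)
Your proposal is correct and matches the paper's own proof exactly: the $n=4$ case is Theorem~\ref{thm:Brylinski}, and for $n>4$ one applies Proposition~\ref{prop:highertensorrank} with $s=4$ to get $\mrank_{\CCC}(2,\ldots,2)\leq 4\cdot 2^{n-4}=2^{n-2}$. Nothing is missing.
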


\begin{proof}
Theorem~\ref{thm:Brylinski} covers the case where $n=4$.
Suppose $n>4$.
The maximal rank of complex $2\times 2\times 2\times 2$ tensors is equal to $4$.
By applying Proposition~\ref{prop:highertensorrank} with $s=4$, we have
$$\mrank_{\CCC}(2,2,\ldots,2)\leq \mrank_{\CCC}(2,2,2,2)\prod_{t=5}^n 2
=4\cdot 2^{n-4}=2^{n-2}.$$
\end{proof}

\begin{lem} \label{lem:essentialrank2}
Let $n$ be a positive integer and let
$A_j$ and $B_j$, $1\leq j\leq n$ be $2\times 2$ real matrices.
There is a rank one real matrix $C$ such that $\rank_{\RRR}(A_j;B_j+C)\leq 2$ 
for any $1\leq j\leq n$.
\end{lem}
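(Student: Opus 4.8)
The plan is to produce $C$ in the form $C=tC_0$, where $C_0$ is a single well-chosen rank one matrix and $t$ is a single sufficiently large real scalar, and to certify $\rank_{\RRR}(A_j;B_j+C)\le 2$ via the sign test of Proposition~\ref{prop:Silva-etal}(2): it suffices to arrange $\Delta(A_j;B_j+C)>0$ for every $j$ with $A_j\neq O$, since for any $j$ with $A_j=O$ one automatically has $\rank_{\RRR}(A_j;B_j+C)\le\rank(O)+\rank(B_j+C)\le 0+2=2$.

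The engine of the argument is an elementary expansion. Fix a rank one matrix $C_0=(\ccc_1,\ccc_2)$; because $\rank C_0=1$ we have $\det(\ccc_1,\ccc_2)=0$, so in the defining formula for $\Delta$ the contribution $\det(\bbb_{j1}+t\ccc_1,\bbb_{j2}+t\ccc_2)$ is merely linear in $t$, and one obtains
$$\Delta(A_j;B_j+tC_0)=\bigl(\det(\aaa_{j1},\ccc_2)+\det(\ccc_1,\aaa_{j2})\bigr)^2\,t^2+(\text{terms of degree }\le 1\text{ in }t),$$
where $\aaa_{j1},\aaa_{j2}$ and $\bbb_{j1},\bbb_{j2}$ are the columns of $A_j$ and of $B_j$. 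Thus $\Delta(A_j;B_j+tC_0)$ is a real quadratic polynomial in $t$ with a perfect-square leading coefficient; whenever that square is strictly positive we get $\Delta(A_j;B_j+tC_0)>0$ for all large $|t|$. As there are only finitely many indices $j$, one common large $t$ then works for all of them simultaneously, and $C:=tC_0$ is again a rank one matrix.

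It remains to choose a rank one $C_0$ with $\det(\aaa_{j1},\ccc_2)+\det(\ccc_1,\aaa_{j2})\neq 0$ for every $j$ with $A_j\neq O$. I would search among matrices of the special shape $C_0=(\ccc_1,\lambda\ccc_1)$, for which this quantity equals $\det(\ccc_1,\,\aaa_{j2}-\lambda\aaa_{j1})$. First pick $\lambda\in\RRR$ outside the finitely many values at which some $\aaa_{j2}-\lambda\aaa_{j1}$ vanishes — for each $j$ with $A_j\ne O$ one has $\aaa_{j1}\neq\zerovec$ or $\aaa_{j2}\neq\zerovec$, so this vanishing happens for at most one $\lambda$ — so that each $\uuu_j:=\aaa_{j2}-\lambda\aaa_{j1}$ is a nonzero vector in $\RRR^2$. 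Then pick $\ccc_1\neq\zerovec$ whose direction differs from that of every $\uuu_j$, so that $\det(\ccc_1,\uuu_j)\neq 0$ for all $j$. Setting $C_0=(\ccc_1,\lambda\ccc_1)$ (a rank one matrix) and taking $|t|$ large as above, $C=tC_0$ has the required property.

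The step I expect to demand the most care is the verification that the $t^{3}$- and $t^{4}$-terms of $\Delta(A_j;B_j+tC_0)$ genuinely cancel: this is forced precisely by $\rank C_0=1$, and it is what converts a naively quartic polynomial into a quadratic with square leading coefficient, so I would carry out that expansion explicitly rather than citing it. Note that the argument nowhere uses invertibility of $A_j$ — only Proposition~\ref{prop:Silva-etal}(2), valid for every real $2\times2\times2$ tensor — so singular nonzero $A_j$ need no separate handling, and only the genuinely degenerate case $A_j=O$ is treated apart, as above.
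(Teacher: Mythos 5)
Your proof is correct and takes essentially the same route as the paper's: both arguments take $C$ to be a large scalar multiple of a rank one matrix $C_0$ chosen so that the perfect square $\Delta(A_j;C_0)=\bigl(\det(\aaa_{j1},\ccc_2)+\det(\ccc_1,\aaa_{j2})\bigr)^2$ is strictly positive for every $j$ with $A_j\ne O$, and then conclude via Proposition~\ref{prop:Silva-etal}(2), the paper obtaining positivity for large scaling by continuity plus sign invariance while you read it off the leading coefficient of $\Delta(A_j;B_j+tC_0)$, with an explicit genericity construction of $C_0$ and the trivial case $A_j=O$ handled separately. The only quibble is your closing worry about cancelling $t^3$- and $t^4$-terms: since $\Delta(A;\cdot)$ is quadratic in the second slice, the polynomial in $t$ has degree at most $2$ for any $C_0$, and the rank one hypothesis is needed only to make the $t^2$-coefficient equal to the perfect square above (killing the $-4\det(A_j)\det(C_0)$ contribution), exactly as your displayed expansion asserts.
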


\begin{proof}
Put $A_j=\begin{pmatrix} a_j&b_j\\ c_j& d_j\end{pmatrix}$ and
$C=\begin{pmatrix} su&sv\\ tu& tv\end{pmatrix}$.
Since
$$\Delta(A_j;C)=(s(ud_j-vc_j)-t(ub_j-va_j))^2,$$
there exists a rank one matrix $C_0$ such that $\Delta(A_j;C_0)>0$ for any $j\in S_2$.
Let $C=\gamma C_0$.
Since $(A_j;B_j+C)$ is $\{E\}^2\times\GL(2,\RRR)$-equivalent to $(A_j; \gamma^{-1}B_j+C_0)$,
The continuity of $\Delta$ implies that for each $j$, there is $h_j>0$ such that
$\Delta(A_j;B_j+C)>0$ for any $\gamma\geq h_j$  by Proposition~\ref{prop:Silva-etal} (1).
For $C=(\max_j h_j)C_0$, we have $\rank(A_j;B_j+C)\leq 2$ by Proposition~\ref{prop:Silva-etal} (2).
\end{proof}

\begin{thm} \label{thm:higher2x...x2}
Let $k\geq 2$.
The maximal rank of real $k$-tensors with size $2\times\cdots\times 2$
is less than or equal to $2^{k-2}+1$.
\end{thm}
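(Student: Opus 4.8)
The plan is to induct on $k$, using Theorem~\ref{thm:Kong-etal} as the base case $k=4$ (and the known values for $k=2,3$), and to reduce the $k$-tensor case to the $(k-1)$-tensor case by splitting off a slice pair that can be made to have rank at most $2$. Write a $k$-tensor $\underline{T}$ with size $2\times\cdots\times2$ by fixing the last index: $\underline{T}=(\underline{A};\underline{B})$, where $\underline{A}$ and $\underline{B}$ are $(k-1)$-tensors with size $2\times\cdots\times2$. The goal is to arrange, after scaling by the $\GL(2,\RRR)$-action on some coordinate, that $\underline{T}$ is the sum of a rank-$\le 2^{k-3}$ piece coming from a $(k-1)$-tensor plus a controlled remainder.

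The key step is to use Lemma~\ref{lem:essentialrank2}. Flatten $\underline{A}$ and $\underline{B}$ along all but the first two coordinates: write $\underline{A}$ as a family of $2\times2$ matrices $A_1,\dots,A_N$ (with $N=2^{k-3}$) indexed by the remaining coordinate values, and similarly $\underline{B}$ as $B_1,\dots,B_N$. By Lemma~\ref{lem:essentialrank2} there is a single rank-one $2\times 2$ matrix $C$ such that $\rank_{\RRR}(A_j;B_j+C)\le 2$ for every $j$. Rewriting this back at the level of tensors: adding to $\underline{B}$ the rank-one-slice correction given by $C\odot\eee_{i_3,\ldots,i_{k-1}}$ in each fiber amounts to adding to $\underline{T}$ a tensor of the form (rank-one $2\times2$ matrix) $\odot$ ($(k-3)$-tensor) $\odot\,\eee_2$ on the last coordinate; such a tensor has rank at most $2^{k-3}$ (it is supported on the last-index-$2$ slab and is a rank-one matrix tensored with an arbitrary $(k-3)$-tensor, whose rank is at most $2^{k-3}$). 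After this correction, the modified tensor $\underline{T}'=(\underline{A};\underline{B}+C\text{-correction})$ has the property that \emph{every} pair of parallel $2\times2$ slices obtained by fixing all coordinates except the first two lies in a rank-$\le2$ $2\times2\times2$ tensor; equivalently, viewing $\underline{T}'$ as a $(k-1)$-tensor by merging the last two $2$'s into a single index of size $\le 4$ but keeping only the rank-$2$ behaviour, one gets that $\underline{T}'$ decomposes as a $(k-1)$-tensor-type object of rank at most $\mrank_{\RRR}(2,\ldots,2)$ for $k-1$ slots, which by induction is $\le 2^{k-3}+1$. Adding the two bounds gives $\rank_{\RRR}(\underline{T})\le (2^{k-3}+1)+2^{k-3}=2^{k-2}+1$.

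The main obstacle is making the reduction in the previous paragraph precise: Lemma~\ref{lem:essentialrank2} produces, for each fiber index $j$, a rank-$\le2$ decomposition of the $2\times2\times2$ tensor $(A_j;B_j+C)$, but these decompositions must be assembled coherently across all $j$ into a genuine low-rank decomposition of a $(k-1)$-tensor. The clean way to phrase it is via Proposition~\ref{prop:highertensorrank}: after the $C$-correction, slicing along the first coordinate exhibits $\underline{T}'$ as a $(k-1)$-tensor of size $2\times\cdots\times2$ whose two first-coordinate slabs, together with the linear structure forced by $\rank(A_j;B_j+C)\le2$, let one apply the recursion $\mrank_{\RRR}(2^{(k-1)})\le 2^{(k-1)-2}+1$. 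One must check that the $C$-correction tensor really has rank $\le 2^{k-3}$: it equals $C\odot\underline{E}\odot\eee_2$ where $\underline{E}$ is a $(k-3)$-tensor of ones in the appropriate coordinates, and since $\rank(C)=1$ while an arbitrary $(k-3)$-tensor with size $2\times\cdots\times2$ has rank at most $2^{k-3}$, the product has rank at most $2^{k-3}$. Care is also needed at the low end of the induction: $k=2$ gives bound $2^0+1=2$ (correct, since $2\times2$ matrices have rank $\le2$) and $k=3$ gives $2+1=3$ (the known maximal rank of $2\times2\times2$ real tensors), so the induction is anchored and Theorem~\ref{thm:Kong-etal} is not even strictly needed as a separate base case, though it is consistent with $k=4$ giving $2^2+1=5$.
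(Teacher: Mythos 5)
Your setup is the same as the paper's: split off the first two modes, view the tensor as $2^{k-3}$ pairs $(A_j;B_j)$ of $2\times2$ slices, and use Lemma~\ref{lem:essentialrank2} to find one rank-one matrix $C$ with $\rank(A_j;B_j+C)\leq 2$ for all $j$. But your rank accounting after that point has a genuine gap. First, you bound the correction tensor by $2^{k-3}$, when the whole point of the lemma producing a \emph{single} $C$ is that the correction equals $C\odot\uuu\odot\cdots\odot\uuu\odot\eee_2$ (your $\underline{E}$ is the all-ones tensor $\uuu\odot\cdots\odot\uuu$, which has rank $1$, not $2^{k-3}$), so the correction is a single rank-one tensor. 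Having given away that saving, you are forced to claim that the modified tensor $\underline{T}'$ has rank at most $2^{k-3}+1$, and this is exactly where the argument fails: $\underline{T}'$ is a $k$-tensor, not a $(k-1)$-tensor, so the induction hypothesis does not apply to it; merging the last two modes produces a $2\times\cdots\times2\times4$ tensor, which is again outside the induction hypothesis, and in any case merging modes can only \emph{decrease} rank, so a bound on the merged tensor says nothing about $\rank(\underline{T}')$. Proposition~\ref{prop:highertensorrank} does not rescue this either: applied with $s=k-1$ it only gives $\mrank_{\RRR}(2,\ldots,2)\leq 2(2^{k-3}+1)=2^{k-2}+2$.

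Moreover, the intermediate statement you need --- that a $k$-tensor all of whose $2\times2\times2$ fibers (over the $2^{k-3}$ middle indices) have rank at most $2$ must have rank at most $2^{k-3}+1$ --- is false in general. Already for $k=4$ it would say that every $2\times2\times2\times2$ real tensor whose two fibers have rank $\leq2$ has rank $\leq3$; but such tensors form a full-dimensional ($16$-dimensional) semi-algebraic subset of $\RRR^{2\times2\times2\times2}$ (both fibers with $\Delta>0$ suffices), whereas the rank-$\leq3$ locus has dimension at most $3\cdot 5=15$. Nothing in your construction restricts $\underline{T}'$ enough to escape this. The repair is simply the correct bookkeeping: each of the $2^{k-3}$ corrected fibers $(A_j;B_j+C)$ contributes at most $2$, giving $\rank(\underline{T}')\leq 2^{k-2}$, and the correction contributes $1$, so $\rank(\underline{T})\leq 2^{k-2}+1$ directly --- no induction on $k$ is needed at all. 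With that change your argument becomes the paper's proof.
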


\begin{proof}
The assertion is true for $k=2,3$.
Then suppose that $k\geq 4$.
Let $e_{i_1,\ldots,i_k}$, $i_1,\ldots,i_k=1,2$ be a standard basis of
$(\RRR^{2})^{\otimes k}$, that is, 
$\eee_{i_1,\ldots,i_k}$ has $1$ at the $(i_1,\ldots,i_k)$-element
and otherwise $0$.
Any tensor $\underline{A}$ of $(\RRR^{2})^{\otimes k}$ 
is written by
$$\sum_{i_1,\ldots,i_k} a_{i_1,\ldots,i_k}\eee_{i_1,\ldots,i_k}.$$
This is described as
$$\sum_{i_4,\ldots,i_k} B(i_4,\ldots,i_k)\odot \eee_{i_4,\ldots,i_k},$$
where $B(i_4,\ldots,i_k)=\sum_{i_1,i_2,i_3}
  a_{i_1,\ldots,i_k}\eee_{i_1,i_2,i_3}$ is a $2\times 2\times 2$ tensor.
By Lemma~\ref{lem:essentialrank2}, there is a rank one $2\times2$ matrix $C$ such that
$B(i_4,\ldots,i_k)+(O;C)$ has rank less than or equal to $2$ for any $i_4,\ldots,i_k$.
We have
\begin{equation*}
\begin{split}
\underline{A}&=\sum_{i_4,\ldots,i_k} (B(i_4,\ldots,i_k)+(O;C))\odot \eee_{i_4,\ldots,i_k}
-\sum_{i_4,\ldots,i_k} (O;C)\odot \eee_{i_4,\ldots,i_k} \\
&=\sum_{i_4,\ldots,i_k} (B(i_4,\ldots,i_k)+(O;C))\odot \eee_{i_4,\ldots,i_k}
-C\odot\eee_2\odot \uuu\odot\cdots\odot\uuu,
\end{split}
\end{equation*}
where $\uuu=\begin{pmatrix} 1\\ 1\end{pmatrix}$ and $\eee_2=\begin{pmatrix} 0\\ 1\end{pmatrix}$,
and then
$$
\rank\underline{A}\leq \sum_{i_4,\ldots,i_k} \rank(B(i_4,\ldots,i_k)+(O;C))+1=2^{k-2}+1.
$$
\end{proof}


\end{document}